\numberwithin{equation}{section}
\newtheorem{thm}{Theorem}[section]
\newtheorem{lem}[thm]{Lemma}
\newtheorem{prop}[thm]{Proposition}
\theoremstyle{definition}
\newtheorem{defn}[thm]{Definition}
\theoremstyle{remark}
\newtheorem{rem}[thm]{Remark}
\newtheorem*{acknowledgements}{Acknowledgements}
\def\Z{\mathbb{Z}}
\def\F{\mathbb{F}}
\def\THH{\mathsf{THH}}
\def\TAQ{\mathsf{TAQ}}
\def\cone{\text{cone}}
\def\Tor{\mathsf{Tor}}
\def\tmf{\mathsf{tmf}}
\def\Tmf{\mathsf{Tmf}}
\def\TMF{\mathsf{TMF}}
\def\ra{\rightarrow}
\def\leq{\leqslant}
\def\geq{\geqslant}
\def\J{\mathcal{J}}
\def\HH{\mathsf{HH}}
\def\Hom{\mathsf{Hom}}
\def\ie{\emph{i.e.}}
\def\id{\mathrm{id}}
\begin{document}
\title[Detecting and describing ramification]{%
Detecting and describing ramification for structured ring spectra}
\author{Eva H\"oning}
\author{Birgit Richter}
\date{\today}
\keywords{Tame ramification, wild ramification, topological
  Andr\'e-Quillen homology, topological Hochschild homology,
  topological modular forms, Tate vanishing}
\subjclass[2000]{55P43}

\begin{abstract}
Ramification for commutative ring spectra can be detected by relative
topological Hochschild homology and by topological
Andr\'e-Quillen homology. 
In the classical algebraic context it is important to
distinguish between tame and wild 
ramification. Noether's theorem characterizes tame ramification in
terms of a normal basis and tame ramification can also be detected via
the surjectivity of the trace map. We transfer the latter fact to ring
spectra 
and use the Tate cohomology spectrum to detect wild ramification in
the context of commutative ring spectra. We study ramification in 
examples in the context of topological K-theory and topological modular forms.

\end{abstract}
\maketitle

\section{Introduction}
Classically, ramification is studied in the setting of extensions of
rings of integers in number fields. If $K \subset L$ is an
extension of number fields and if $\mathcal{O}_K \ra \mathcal{O}_L$ is
the corresponding extension of rings of integers, then a prime ideal
$\mathfrak{p} \subset \mathcal{O}_K$ ramifies in $L$, if
$\mathfrak{p}\mathcal{O}_L = \mathfrak{p}_1^{e_1} \cdot \ldots \cdot
\mathfrak{p}_s^{e_s}$ in $\mathcal{O}_L$ and $e_i > 1$ for at least
one $1 \leq i \leq s$. The ramification is \emph{tame} when  the
ramification indices $e_i$ are all relatively prime to the residue
characteristic  of $\mathfrak {p}$ and it is \emph{wild} otherwise. Auslander
and Buchsbaum \cite{ab59} considered ramification in the setting of general
noetherian rings. If $K \subset L$ is a finite $G$-Galois extension, then
$\mathcal{O}_K \ra
\mathcal{O}_L$ is unramified, if and only if $\mathcal{O}_K = \mathcal{O}_L^G\ra
\mathcal{O}_L$ is a Galois extension of commutative rings and this in
turn says that $\mathcal{O}_L \otimes_{\mathcal{O}_K}\mathcal{O}_L
\cong \prod_G \mathcal{O}_L$ (see \cite[Remark 1.5 (d)]{chr}, \cite{ab59} or \cite[Example
2.3.3]{rognes} for more details).

Our main interest is to investigate notions of
ramified extensions of ring spectra and to study examples.

Rognes \cite[Definition 4.1.3]{rognes} introduces 
$G$-Galois extensions of 
ring spectra. A map $A \ra B$ of commutative ring spectra is a
$G$-Galois extension for a finite group $G$, if certain cofibrancy
conditions are satisfied, if $G$ acts on $B$ from the left through
commutative $A$-algebra maps and if the following two conditions are
satisfied:

\begin{enumerate}
\item
  The map from $A$ to the homotopy fixed points of $B$ with respect to
  the $G$-action, $i \colon A \ra B^{hG}$ is a weak equivalence.
\item 
  The map
  \begin{equation} \label{eq:unramified}
    h \colon B \wedge_A B \ra \prod_G B
  \end{equation}
  is a weak equivalence.
\end{enumerate}
Here, $h$ is right
adjoint to the composite map
\[ \xymatrix@1{B \wedge_A B \wedge G_+ \ar[r] & B \wedge_A B \ar[r] &
    B}, \]  induced by the $G$-action $B \wedge G_+ \cong G_+ \wedge B \ra
B$ on $B$ and the multiplication on $B$.

Condition (1) is the fixed point condition familiar from ordinary
Galois theory. Condition (2) is needed to ensure that the map $A \ra
B$ is \emph{unramified}. Among other things, it implies for instance
that the $A$-endomorphisms of $B$ correspond to the group elements in
the sense that 
\[ j \colon B \wedge G_+ \ra F_A(B,B),  \]
is a weak equivalence, where $j$ is right adjoint to the composite map
\[ (B \wedge G_+) \wedge_A B \ra B \wedge_A B \ra B, \]
which is again induced by the $G$-action and the multiplication on $B$.

If $A$ is the Eilenberg-MacLane spectrum $H\mathcal{O}_K$ and $B =
H\mathcal{O}_L$ for a $G$-Galois extension $K \subset L$, then
$H\mathcal{O}_K \ra H\mathcal{O}_L$ is a $G$-Galois extension of ring spectra
if and only if
$\mathcal{O}_K \ra \mathcal{O}_L$ is a $G$-Galois extension of commutative
rings. 

For certain Galois extensions Ausoni and Rognes \cite{ar} conjecture a
version of Galois descent for algebraic K-theory. A descent result that covers
many of the conjectured cases is established in \cite{cmnn}. In some
cases,  descent can be established even in the presence of ramification.
Ausoni \cite[Theorem 10.2]{AuTHHku} shows for instance that the canonical map $K(\ell_p) \ra
K(ku_p)^{hC_{p-1}}$ is an equivalence after $p$-completion despite the fact
that the inclusion of the $p$-completed connective Adams summand, $\ell_p$, into $p$-completed
topological connective K-theory, $ku_p$, should be viewed as a tamely ramified
extension of commutative ring spectra. In other cases that are not Galois
extensions, for instance in cases, that we will identify as wildly ramified,
one can consider a  modified version of descent \cite[\S 5.4]{cmnn}.

How can we detect ramification? 
The unramified condition from \eqref{eq:unramified} ensures for
instance that $A \ra B$ is 
separable \cite[Definition 9.1.1]{rognes} and this in turn implies that the
canonical map from $B$ to the relative topological Hochschild
homology, $\THH^A(B)$, is an equivalence and that the spectrum of topological
Andr\'e-Quillen homology $\TAQ^A(B)$ \cite{b99}  is trivial. So if we know for 
a map of commutative ring spectra $A \ra B$ that $B \ra
\THH^A(B)$ is not a weak equivalence or that $\pi_*\TAQ^A(B)
\neq 0$, then this is an indicator for ramification. We will study
examples of non-vanishing $\TAQ$  in 
\ref{subsec:kaehler} and study relative topological Hochschild homology in
examples related to level-$2$-structures on elliptic curves in
\ref{subsec:thh}. 

An interesting class of examples arises as connective covers of
$G$-Galois extensions. Akhil Mathew shows in \cite[Theorem 6.17]{m16}
that connective Galois extensions are algebraically \'etale: the
induced map on homotopy groups is \'etale in a graded sense. So, in
particular, connective covers of 
Galois extensions are rarely Galois extensions, because several known
examples of Galois extensions such as $KO \ra KU$, $L_p \ra KU_p$ and
examples of Galois extensions in the context of topological modular
forms are far from behaving nicely on the level of homotopy groups.

We use relative topological Hochschild homology and topological
Andr\'e-Quillen homology  in order to detect ramification in the cases
$ko \ra ku$, $\ell \ra ku_{(p)}$, $\tmf_0(3)_{(2)} \ra
\tmf_1(3)_{(2)}$, $\tmf_{(3)} \ra \tmf_0(2)_{(3)}$, $\Tmf_{(3)} \ra 
\Tmf_0(2)_{(3)}$ and $\tmf_0(2)_{(3)} \ra \tmf(2)_{(3)}$. We also study a
version of the discriminant map in the context of structured ring spectra and
apply it to the examples $\ell \ra ku_{(p)}$ and $ko \ra ku$ in
\ref{subsec:discriminant}.

For certain finite extensions of discrete valuation rings tame ramification is
equivalent to being log-\'etale (see for instance \cite[Example
4.32]{ro-log}).   
It is known by work of Sagave \cite{sa}, that $\ell \ra ku_{(p)}$ is log-\'etale
if one considers the log structures generated by $v_1 \in
\pi_{2p-2}\ell$ and $u \in \pi_2(ku)$. We show that $ko \ra ku$ is
\emph{not} log-\'etale if one considers the log structures generated
by the Bott elements $\omega \in \pi_8(ko)$ and $u \in \pi_2(ku)$.


Emmy Noether shows \cite[\S 2]{noe} that tame ramification is equivalent to
the existence of a normal basis. Tame ramification can also be
detected by the surjectivity of the trace map 
\cite[Theorem 2, Chapter 1, \S 5]{cf}. This in turn
yields a vanishing of Tate cohomology. 

Using Tate cohomology as a possible criterion for wild ramification
is for instance suggested by Rognes in \cite{r14}. Rognes also shows a version
of Noether's theorem in \cite[Theorem 5.2.5]{rognes-s}: If a spectrum with a
$G$-action $X$ is in the thick subcategory generated by spectra of the
form $G_+ \wedge W$, then $X^{tG} \simeq *$, so in particular, if $B$
has a normal basis, $B \simeq G_+ \wedge A$, then $B^{tG} \simeq
*$. 

We use the Tate spectrum in order to propose a definition of tame and
wild ramification of maps of ring spectra and study examples in the context
of topological K-theory, topological modular forms and cochains on classifying
spaces with coefficients in Morava E-theory aka Lubin-Tate spectra. 

Several of our examples use topological modular forms with level structures. 
The spectrum of topological modular forms, $\TMF$, 
arises as the global sections of 
a structure sheaf of $E_\infty$-ring spectra on the moduli stack of elliptic 
curves, $\mathcal{M}_{\text{ell}}$. A variant of it, $\Tmf$, lives on a 
compactified version, $\overline{\mathcal{M}}_{\text{ell}}$. Its connective 
version is denoted by $\tmf$. There are other variants corresponding to level 
structures on elliptic curves. Recall that a $\Gamma(n)$-structure 
(or level $n$-structure for short) carries the datum of a chosen isomorphism 
between the $n$-torsion points of an elliptic curve and the group 
$(\Z/n\Z)^2$. A $\Gamma_1(n)$-structure corresponds to the choice of a 
point of exact order $n$ whereas a $\Gamma_0(n)$-structure comes from the
choice 
of a subgroup of order $n$ of the $n$-torsion points. See \cite[Chapter 3]{km} 
for the precise definitions and for background. These level structures give
rise to a tower of moduli problems (see \cite[p.~200]{km})

\[
  \xymatrix{
    [\Gamma(n)] \\
    [\Gamma_1(n)] 
    \ar@{-}[u]^{\begin{pmatrix}1 & * \\ 0 & *\end{pmatrix}}\\
    [\Gamma_0(n)]\ar@{-}[u]^{(\Z/n\Z)^\times}\\
    (\text{Ell}) \ar@{-}[u] \ar@{-}@/_10ex/[uuu]_{GL_2(\Z/n\Z)}
  }
\]
with corresponding spectra $\TMF(n)$, $\TMF_1(n)$ and $\TMF_0(n)$ and their
compactified versions $\Tmf(n)$, $\Tmf_1(n)$ and $\Tmf_0(n)$
\cite[Theorem 6.1]{hl}.

In \cite{mm15} Mathew and 
Meier prove that the maps $\Tmf[\frac{1}{n}] \ra \Tmf(n)$ are
\emph{not} Galois extensions but they 
satisfy Tate vanishing, which might be seen as an indication of tame
ramification. In contrast, we will show that $\tmf(n)^{tGL_2(\Z/n\Z)}$
is non-trivial if $2$ does not divide $n$ or if $n$ is a power of $2$.

\begin{acknowledgements}
  We thank Lennart Meier for sharing \cite{meier} with us and we thank
  him, John Rognes and Mark Behrens 
  for several helpful comments. Mike Hill helped us a lot with patient 
  explanations about tmf and friends. The first named author was funded by
  the DFG priority program SPP 1786
  \emph{Homotopy Theory and Algebraic Geometry}. The last named
  author would like to thank the Isaac Newton Institute for Mathematical
  Sciences for support and hospitality during the programme 
  \emph{Homotopy Harnessing Higher Structures}  when work on this paper was
  undertaken. This work was supported by EPSRC grant number EP/R014604/1.
\end{acknowledgements}

\section{Detecting ramification}
\subsection{Topological Andr\'e-Quillen homology} \label{subsec:kaehler}

For a map of connective commutative ring spectra $i \colon A \ra B$ we use the
connectivity of the map to determine the bottom homotopy group of $\TAQ^A(B)$ 
\cite{b99}. The non-triviality of $\TAQ^A(B)$ indicates that the map $i$ is ramified. 

\subsubsection*{Algebraic cases} If $\mathcal{O}_K \ra \mathcal{O}_L$ is 
an extension of number rings with corresponding extension of number fields
$K \subset L$, then of course we cannot use a connectivity argument for
understanding $\TAQ$, but here, the algebraic
module of K\"ahler differentials, $\Omega^1_{\mathcal{O}_L|\mathcal{O}_K}$, is
isomorphic to the first Hochschild homology group
$\HH_1^{\mathcal{O}_K}(\mathcal{O}_L)$ which in turn is
$\pi_0\TAQ^{H\mathcal{O}_K}(H\mathcal{O}_L)$ \cite[Theorem
2.4]{bari}. The ramification type 
of $\mathcal{O}_K \ra \mathcal{O}_L$ can be read off the \emph{different},
\ie, the annihilator of the module of K\"ahler differentials. 

\subsubsection*{The connective Adams summand} Let $\ell$ denote the
Adams summand of connective $p$-localized topological complex K-theory,
$ku_{(p)}$. Here, $p$ is an odd prime.

The inclusion $i \colon \ell \ra ku_{(p)}$ induces an isomorphism on $\pi_0$
and $\pi_1$. Thus, by the Hurewicz theorem for topological Andr\'e-Quillen
homology \cite[Lemma 8.2]{b99}, \cite[Lemma 1.2]{BGR08} we get that
$\pi_2\TAQ^{\ell}(ku_{(p)})$ is the bottom homotopy group and is isomorphic to
the second homotopy group of the
cone of $i$, $\cone(i)$ and this in turn can be determined by the long exact
sequence
\[ \ldots \ra \pi_2(\ell) = 0 \ra \pi_2(ku_{(p)}) \ra \pi_2(\cone(i)) \ra
  \pi_1(\ell) = 0 \ra \ldots \]
hence $\pi_2 \TAQ^{\ell}(ku_{(p)}) \cong \Z_{(p)}$. 

We know from \cite{dlr} that $\ell \ra ku_{(p)}$ shows features of a 
tamely ramified extension of number rings and Sagave shows
\cite[Theorem 6.1]{sa} that 
$\ell \ra ku_{(p)}$ is log-\'etale. 

\subsubsection*{Real and complex connective topological K-theory} The
complexification map $c \colon ko \ra ku$ induces an isomorphism on $\pi_0$
and an epimorphism on $\pi_1$, so it is a $1$-equivalence. Hence again 
$\pi_2\cone(c) \cong \pi_2(\TAQ^{ko}(ku))$ is the bottom homotopy group, but
here we obtain an extension
\[ 0 \ra \pi_2ku = \Z \ra \pi_2\cone(c) \ra \pi_1(ko)=\Z/2\Z \ra 0. \]

In order to understand $\pi_2\cone(c)$ we consider the cofiber sequence 
\[ \xymatrix@1{
\Sigma KO \ar[r]^\eta & KO \ar[r]^c & KU \ar[r]^\delta & \Sigma^2 KO
  }\]
and the commutative diagram on homotopy groups
\[ \xymatrix{
\Z/2\Z = \pi_2ko \ar[r]^(0.6){\pi_2(c)}_(0.6)0
\ar[d]^{\pi_2(\tau_{ko})}_\cong & \pi_2ku \ar[r]
\ar[d]^{\pi_2(\tau_{ku})}_\cong & \pi_2 \cone(c) \ar[r] \ar[d]^g &
\pi_1ko \ar[r]^(0.4){\pi_1(c)} \ar[d]^{\pi_1(\tau_{ko})}_\cong & 0 =
\pi_1ku  \ar[d]^{\pi_1(\tau_{ku})}_\cong \\   
\Z/2\Z = \pi_2KO \ar[r]^(0.6){\pi_2(c)}_(0.6)0& \pi_2KU \ar[r]^{\pi_2(\delta)}&
\pi_2\Sigma^2 KO \ar[r]^{\pi_2(\Sigma \eta)} & \pi_1KO
\ar[r]^(0.4){\pi_1(c)}& 0 = \pi_1KU.  }\]
Here $\tau_e \colon e \ra E$ denotes the map from the connective cover
$e$ of $E$ to $E$. The middle vertical map $g$ is the map induced by the
cofiber sequences. By the five lemma, $g$ is an isomorphism hence
\[  \pi_2 \cone(c) \cong \pi_2\Sigma^2 KO \cong \Z. \]
So this group is also torsion free. We will later see that $ko \ra ku$
is not log-\'etale and we will see some other indicators for wild
ramification, but the bottom homotopy group of
$\TAQ^{ko}(ku)$ does not detect that. 
\subsubsection*{Connective topological modular forms with level structure
  (case $n=3$) }
We consider $\tmf_1(3)$. Its homotopy groups are $\pi_*(\tmf_1(3)) \cong
\Z[\frac{1}{3}][a_1, a_3]$ with $|a_i| = 2i$. See \cite{hl} for some
background. There is a $C_2$-action on $\tmf_1(3)$ coming from the permutation
of elements of exact order three and one denotes by $\tmf_0(3)$ the connective
cover of the homotopy fixed points, $\tmf_1(3)^{hC_2}$. There is a homotopy
fixed point spectral sequence that was studied in detail in \cite{mr09} for
the periodic versions. In \cite[p.~407]{hl} it is explained how to adapt this
calculation to the connective variants: The terms in the spectral sequence
with $s > t-s \geq 0$ can be ignored. The $C_2$-action on the $a_i$'s is
given by the sign-action, so if $\tau$ generates $C_2$, then $\tau(a_i^n)
=(-1)^na_i^n$. 

This implies
that only $H^0(C_2; \pi_0(\tmf_1(3))) \cong \Z[\frac{1}{3}]$ survives to
$\pi_0(\tmf_0(3))$. For $\pi_1$ we get a contribution from $H^1(C_2;
\pi_2(\tmf_1(3)))$, giving a $\Z/2\Z$
generated by the class of $a_1$ (this detects an $\eta$). For
$\pi_2(\tmf_0(3))$ the class of $a_1^2$ generates a copy of $\Z/2\Z$. 

Hence the map $j \colon \tmf_0(3)_{(2)} \ra \tmf_1(3)_{(2)}$ is $1$-connected, so
$\pi_2\TAQ^{\tmf_0(3)_{(2)}}(\tmf_1(3)_{(2)})$ is the bottom homotopy group
and is isomorphic to $\pi_2(\cone(j))$ which sits in an extension.
We can use the commutative diagram of commutative ring spectra from
\cite[Theorem 63]{hl}
\[ \xymatrix{
    \tmf_0(3) \ar[r] \ar[d] & \tmf_1(3) \ar[d] \\
    ko[\frac{1}{3}] \ar[r] & ku[\frac{1}{3}]
  } \]
in order to determine to $\pi_2(\cone(j))$. By
\cite[Theorem 1.2]{ln} there is
a cofiber sequence of $\tmf_1(3)_{(2)}$-modules
\[ \xymatrix@1{\Sigma^6\tmf_1(3)_{(2)} \ar[r]^{v_2} & \tmf_1(3)_{(2)} \ar[r] &
    ku_{(2)}} \] and hence
  $\pi_2(\tmf_1(3)_{(2)}) \cong \pi_2(ku_{(2)})$. 

The diagram 
\[ \xymatrix{
\pi_2\tmf_0(3)_{(2)} \ar[r]^0 \ar[d]^\cong & \pi_2\tmf_1(3)_{(2)} \ar[d]^\cong \ar[r] & 
\pi_2\cone(j) \ar[d] \ar[r] & \pi_1\tmf_0(3)_{(2)} \ar[d]^\cong \ar[r] &
\pi_1\tmf_1(3)_{(2)} = 0 \ar[d]^\cong\\
\pi_2ko_{(2)} \ar[r]^0 & \pi_2ku_{(2)}  \ar[r] &
\pi_2\cone(c) \ar[r] & \pi_1ko_{(2)}\ar[r]^0 &
\pi_1ku_{(2)}= 0.  
  } \]
commutes and the $5$-lemma implies that $\pi_2(\cone(j)) \cong
\Z_{(2)}$.  

\subsubsection*{Connective topological modular forms with level structure
  (case $n=2$, $p=3$)}
Forgetting a $\Gamma_0(2)$-structure yields a map $f\colon \tmf_{(3)} \ra
\tmf_0(2)_{(3)}$ such that $f$ is a $3$-equivalence. We will recall more details about these spectra at the beginning of \ref{subsec:thh}. Again, we obtain that
the bottom non-trivial homotopy group of the spectrum of topological Andr\'e-Quillen homology is
$\pi_4(\TAQ^{\tmf_{(3)}}(\tmf_0(2)_{(3)})) \cong
\pi_4(\cone(f))$. There is a short exact sequence
\[ 0 = \pi_4\tmf_{(3)} \ra \pi_4\tmf_0(2)_{(3)} = \Z_{(3)} \ra
  \pi_4\cone(f) \ra \pi_3\tmf_{(3)} \cong \Z/3\Z \ra 0\] 
so a priori $\pi_4 \cone(f)$ could be isomorphic to $\Z_{(3)}$ or to
$\Z_{(3)} \oplus \Z/3\Z$.

 There is an equivalence \cite[\S 2.4]{behrens}
\[ \tmf_{(3)} \wedge T \simeq \tmf_0(2)_{(3)}\]
where $T = S^0 \cup_{\alpha_1} e^4 \cup_{\alpha_1} e^8$ with
$\alpha_1$ denoting the generator of $\pi_3^s$ at $3$. Thus $T$ is
part of a cofiber sequence
\[ S^0 \ra T \ra \Sigma^4 \cone(\alpha_1) \]
and we obtain a cofiber sequence
\[ \tmf_{(3)} = \tmf_{(3)} \wedge S^0 \ra \tmf_{(3)} \wedge T \simeq
  \tmf_0(2)_{(3)} \ra   \tmf_{(3)} \wedge \Sigma^4 \cone(\alpha_1)\]
and thus  
\[ \pi_4(\cone(f)) \cong \pi_4 (\tmf_{(3)} \wedge \Sigma^4
  \cone(\alpha_1)) \cong \pi_0 (\tmf_{(3)} \wedge  \cone(\alpha_1)). \] 
But as we have a short exact sequence
\[ 0 = \pi_0(\Sigma^3 \tmf_{(3)})  \ra \pi_0 (\tmf_{(3)}) \cong \Z_{(3)}
  \ra \pi_0 (\tmf_{(3)} \wedge  \cone(\alpha_1)) \ra 0 \]
we obtain
\[ \pi_4(\TAQ^{\tmf_{(3)}}(\tmf_0(2)_{(3)}))  \cong \Z_{(3)}. \] 

\subsection{Relative topological Hochschild homology} \label{subsec:thh}

In \cite{dlr} (see also \cite{dlrerratum} for a correction) we show that the 
relative topological Hochschild homology spectra $\THH^{\ell}(ku_{(p)})$ and
$\THH^{ko}(ku)$ have highly non-trivial homotopy groups. Here, we extend these
results to the relative $\THH$-spectra of $\tmf_{(3)} \to \tmf_0(2)_{(3)}$, $\Tmf_{(3)} \to \Tmf_0(2)_{(3)}$ and $\tmf_0(2)_{(3)} \to \tmf(2)_{(3)}$.   
For formulas concerning the coefficients of elliptic curves we refer to
\cite{deligne}. 

Recall that we  have $\tmf_0(2)_{(3)} \simeq \tau_{\geq 0}\tmf(2)_{(3)}^{hC_2}$.  By \cite[\S 7]{st} 
we know that $\pi_*\tmf(2)_{(3)} \cong \Z_{(3)}[\lambda_1, \lambda_2]$
with $|\lambda_i| = 4$ and with $C_2$-action given by $\lambda_1 \mapsto \lambda_2$  and $\lambda_2 \mapsto \lambda_1$  \cite[Lemma 7.3]{st}. Since $|C_2|$ is invertible in
$\pi_*\tmf(2)_{(3)}$ the $E^2$-page of the homotopy fixed point spectral sequence is
given by 
\[ H^*(C_2, \pi_*\tmf(2)_{(3)}) = H^0(C_2, \pi_*\tmf(2)_{(3)}) = \pi_*(\tmf(2)_{(3)})^{C_2}.
\] 
Thus, we have
\[ \pi_*\tmf_0(2)_{(3)} = \Z_{(3)}[\lambda_1 + \lambda_2, \lambda_1\lambda_2] =
  \Z_{(3)}[a_2, a_4] \] 
with $a_2 = -(\lambda_1 + \lambda_2)$ and $a_4 = \lambda_1 \lambda_2$. 
Recall the following facts about the homotopy of $\tmf_{(3)}$ (see for instance \cite[p.~192]{tmfbook}):  We have 
\[
\pi_*\tmf_{(3)} = \begin{cases}
                           \Z_{(3)}\{1\},  &  * = 0; \\
                           \Z/3\Z\{\alpha_1\}  &   * = 3; \\
                           \Z_{(3)}\{c_4\}, & * = 8; \\
                           \Z_{(3)}\{c_6\}, & * = 12; \\ 
                           0,  & * = 4,5,6,7;
                         \end{cases} \] 
where $\alpha_1$ is the image of $\alpha_1 \in \pi_3(S_{(3)})$ under 
$\pi_3(S_{(3)}) \to \pi_3\tmf_{(3)}$. 
By \cite[Proof of Proposition 10.3]{st} we have that the map
$\pi_*\tmf_{(3)} \to \pi_*\tmf_0(2)_{(3)}$ satisfies 
$c_4 \mapsto  16a_2^2 -48 a_4$ and $c_6 \mapsto -64 a_2^3 + 288 a_2a_4$. (There is a
discrepancy between our sign for $c_6$ and that in \cite{st}.) 

We know from personal communication with Mike Hill that there is  a fiber
sequence of $\tmf_0(2)_{(3)}$-modules 
\[  \begin{tikzcd} 
    \Tmf_0(2)_{(3)} \ar{r} & \tmf_0(2)_{(3)}[a_2^{-1}] \times
    \tmf_0(2)_{(3)}[a_4^{-1}] \ar{r}{f} & \tmf_0(2)_{(3)}[(a_2a_4)^{-1}].  
\end{tikzcd} \]  
See \cite[Proposition 4.24]{hm} for the analogous statement at $p = 2$.  
The kernel of $\pi_*(f)$  has $\mathbb{Z}_{(3)}$-basis 
\[ \{(a_2^n a_4^m, -a_2^n a_4^m) |  n, m \in \mathbb{N} \} \]  
and the cokernel has $\mathbb{Z}_{(3)}$-basis 
\[ \{ \frac{1}{a_2^n a_4^m} \mid n \geq 1, m \geq 1 \}. \] 
We get that  in negative degrees $\pi_*\Tmf_0(2)_{(3)}$ is given by 
\[ \bigoplus_{n,m \geq 1} \mathbb{Z}_{(3)}\{ \frac{1}{a_2^na_4^m}\}, \] 
where $\frac{1}{a_2^n a_4^m}$ has degree  $-4n-8m-1$. The
 $\pi_*\tmf_0(2)_{(3)}$-action is  given by 
\[ a_2 \cdot \frac{1}{a_2^na_4^m} = \begin{cases} 
\frac{1}{a_2^{n-1}a_4^m}, &  \text{ if }  n \geq 2 \\
                                                 0, & \text{ otherwise},
                                               \end{cases} \] 
 and analogously for $a_4$. 
 
 By the gap theorem (see for instance
 \cite{konter}) we have $\pi_*\Tmf_{(3)}  \cong 0$ for $-21 <  * < 0$.

\begin{lem} \label{relsmash1}
\[ \pi_*(\tmf_0(2)_{(3)} \wedge_{\tmf_{(3)}} \tmf_0(2)_{(3)}) \cong
  \Z_{(3)}[a_2,a_4,r]/r^3+a_2r^2 +a_4 r =: \Z_{(3)}[a_2,a_4,r]/I\]
where $r$ has degree $4$ and is mapped to zero under the multiplication map. 
\end{lem}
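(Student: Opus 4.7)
The plan is to first establish the additive structure of the smash product using the equivalence $\tmf_{(3)}\wedge T\simeq\tmf_0(2)_{(3)}$ recalled above, and then to identify the multiplicative structure via the moduli-theoretic description of pairs of $\Gamma_0(2)$-structures on an elliptic curve.

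For the additive structure, smashing the equivalence with $\tmf_0(2)_{(3)}$ over $\tmf_{(3)}$ yields an equivalence of $\tmf_0(2)_{(3)}$-modules
\[
\tmf_0(2)_{(3)}\wedge_{\tmf_{(3)}}\tmf_0(2)_{(3)} \simeq \tmf_0(2)_{(3)}\wedge T.
\]
Since $\pi_*\tmf_0(2)_{(3)}=\Z_{(3)}[a_2,a_4]$ is concentrated in degrees divisible by four, we have $\pi_3\tmf_0(2)_{(3)}=0$, so $\alpha_1\in\pi_3(S_{(3)})$ acts trivially on $\tmf_0(2)_{(3)}$. Both attaching maps of $T=S^0\cup_{\alpha_1}e^4\cup_{\alpha_1}e^8$ therefore become null after smashing with $\tmf_0(2)_{(3)}$, producing a $\tmf_0(2)_{(3)}$-module splitting
\[
\tmf_0(2)_{(3)}\wedge T \simeq \tmf_0(2)_{(3)}\vee\Sigma^4\tmf_0(2)_{(3)}\vee\Sigma^8\tmf_0(2)_{(3)}.
\]
Additively, $\pi_*(\tmf_0(2)_{(3)}\wedge_{\tmf_{(3)}}\tmf_0(2)_{(3)})$ is therefore a free rank-three $\Z_{(3)}[a_2,a_4]$-module with generators in degrees $0$, $4$, and $8$, matching the module structure of $\Z_{(3)}[a_2,a_4,r]/(r^3+a_2r^2+a_4r)$ on the basis $\{1,r,r^2\}$.

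For the multiplicative identification, I would select a degree-$4$ class $r$ which is killed by the multiplication map $\mu$ and then verify the cubic relation. The motivation is geometric: $\tmf_0(2)_{(3)}\wedge_{\tmf_{(3)}}\tmf_0(2)_{(3)}$ corepresents triples $(E,H_1,H_2)$ of an elliptic curve together with two chosen order-$2$ subgroups. Normalising with respect to $H_1$ gives the Weierstrass form $y^2=x^3+a_2x^2+a_4x$, and the $x$-coordinate $r$ of a generator of $H_2$ is a root of $x(x^2+a_2x+a_4)$, yielding the stated relation $r^3+a_2r^2+a_4r=0$. The vanishing $\mu(r)=0$ reflects that $\mu$ corresponds to the diagonal $H_1=H_2$, where the second $2$-torsion point coincides with the first at $x=0$.

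The main obstacle is making this multiplicative identification rigorous at the spectrum level. A naïve Künneth-type tensor product $\pi_*\tmf_0(2)_{(3)}\otimes_{\pi_*\tmf_{(3)}}\pi_*\tmf_0(2)_{(3)}$ is not enough: the $c_4$- and $c_6$-relations alone force only $R^3+3a_2R^2+9a_4R=0$ for $R=1\otimes a_2-a_2\otimes 1$, differing from the stated cubic by the substitution $r=R/3$ that is not available over $\Z_{(3)}$. The correct rescaling must be supplied spectrum-theoretically by higher $\Tor$-contributions arising from the $3$-torsion class $\alpha_1\in\pi_3\tmf_{(3)}$. To bypass this, I would first compute the periodic variant $\pi_*(\TMF_0(2)_{(3)}\wedge_{\TMF_{(3)}}\TMF_0(2)_{(3)})$ via descent along the finite étale cover $\mathcal{M}_0(2)_{(3)}\to\mathcal{M}_{\mathrm{ell},(3)}$ (étale because $2$ is a unit at $3$), which directly yields the moduli-theoretic fibre-product ring in the cleaner $r$-presentation, and then transfer back to the connective variant using the additive splitting above together with connectivity.
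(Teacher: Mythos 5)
Your additive step is the same as the paper's: smash the cofiber sequences defining $T$ with $\tmf_0(2)_{(3)}$, observe that the attaching maps become null because $\pi_*\tmf_0(2)_{(3)}$ is concentrated in degrees $\equiv 0 \pmod 4$, and conclude a three-term splitting. Your diagnosis of the obstruction is also exactly right: the $c_4$- and $c_6$-relations in the bare K\"unneth ring only give $R^3+3a_2R^2+9a_4R=0$ for $R=\eta_R(a_2)-a_2$, and the heart of the matter is proving that $R$ is divisible by $3$. That divisibility comes from the $3$-torsion of $\alpha_1$, and the paper establishes it concretely by chasing the cofiber sequence for $\cone(\alpha_1)$ to show that $\eta_R(a_2)$ maps to $3$ times a unit under $\delta_4\colon\pi_4(\tmf_0(2)_{(3)}\wedge\cone(\alpha_1))\to\pi_4(\Sigma^4\tmf_0(2)_{(3)})$. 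Your proposed bypass via the periodic moduli picture is a genuinely different route, and the geometric description of $\mathcal{M}_0(2)\times_{\mathcal{M}_{\mathrm{ell}}}\mathcal{M}_0(2)$ does produce the clean cubic with no factors of $3$ to divide away.

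However, the ``transfer back'' step has a real gap. The localization map on homotopy is injective, so the connective ring embeds as a graded subring of $\Z_{(3)}[a_2,a_4,\Delta^{-1},r]/(r^3+a_2r^2+a_4r)$, but this alone does not identify the subring. In particular, $\pi_4$ of the periodic smash is not just $\Z_{(3)}\{a_2,r\}$: inverting $\Delta$ (degree $24$) creates infinitely many new $\Z_{(3)}$-generators in each degree, so it is not automatic that the moduli-theoretic class $r$ lies in the image of the connective $\pi_4$, nor that the connective degree-$4$ generator killed by $\mu$ maps to a unit times $r$ rather than to $3r$ or worse. Establishing that would require an argument of essentially the same character as the paper's $\delta_4$ computation, so the detour through the periodic case does not actually save you from the hard part. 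There is a second, smaller omission: even once the degree-$4$ class $r$ is chosen, the assertion that $r^2$ generates the degree-$8$ summand of the splitting (so that $\{1,r,r^2\}$ really is an $\Z_{(3)}[a_2,a_4]$-basis) is a separate non-formal claim; the paper proves it by showing $\Delta_8(r^2)$ is a unit using the $c_4$-relation. Your proposal cites the splitting as if it already delivered this, which it does not.
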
 
\begin{proof}
As above we use that we have  an equivalence of $\tmf_{(3)}$-modules $\tmf_{(3)} \wedge T \simeq \tmf_0(2)_{(3)}$.
Here, $T$ is defined by the cofiber sequences 
\[ \begin{tikzcd} 
S^3_{(3)} \ar{r}{\alpha_1} & S^0_{(3)}  \ar{r} & \cone(\alpha_1) \ar{r} & S^4_{(3)} 
\end{tikzcd}\]
and 
\[ \begin{tikzcd} 
S^7_{(3)} \ar{r}{\phi}  &  \cone(\alpha_1) \ar{r} &  T \ar{r} & S^8_{(3)}, 
\end{tikzcd}\]
where $S^7_{(3)} \xrightarrow{\phi} \cone(\alpha_1) \to S^4_{(3)}$ is equal to $\alpha_1$. 
We get an equivalence of  left $\tmf_0(2)_{(3)}$-modules
\[\tmf_0(2)_{(3)} \wedge_{\tmf_{(3)}} \tmf_0(2)_{(3)} \simeq  \tmf_0(2)_{(3)} \wedge_{\tmf_{(3)}} (\tmf_{(3)} \wedge T) \simeq \tmf_0(2)_{(3)} \wedge T. \]
Smashing the above cofiber sequences with $\tmf_0(2)_{(3)}$ 
gives  cofiber sequences of $\tmf_0(2)_{(3)}$-modules
\[ \begin{tikzcd} 
\Sigma^3 \tmf_0(2)_{(3)} \ar{r}{\bar{\alpha}_1} & \tmf_0(2)_{(3)}  \ar{r} & \tmf_0(2)_{(3)} \wedge \cone(\alpha_1)  \ar{r}{\delta} & \Sigma^4 \tmf_0(2)_{(3)} 
\end{tikzcd}\] 
and 
\[\begin{tikzcd} 
\Sigma^7 \tmf_0(2)_{(3)} \ar{r}{\bar{\phi}} & \tmf_0(2)_{(3)} \wedge \cone(\alpha_1) \ar{r} &  \tmf_0(2)_{(3)} \wedge T \ar{r}{\Delta} & \Sigma^8 \tmf_0(2)_{(3)}.
\end{tikzcd}\] 
The map $\bar{\alpha}_1$ is zero in the derived category of $\tmf_0(2)_{(3)}$-modules, because $\pi_*(\tmf_0(2)_{(3)})$  is concentrated in even degrees.  We therefore get  an equivalence of $\tmf_0(2)_{(3)}$-modules 
\[ \tmf_0(2)_{(3)} \wedge \cone(\alpha_1) \simeq \tmf_0(2)_{(3)}  \vee \Sigma^4 \tmf_0(2)_{(3)}.\] 
This implies  that $\tmf_0(2)_{(3)} \wedge \cone(\alpha_1)$ has non-trivial homotopy groups only in even degrees and  therefore that $\bar{\phi}$ is zero in the derived category of $\tmf_0(2)_{(3)}$-modules.  We get an equivalence of $\tmf_0(2)_{(3)}$-modules 
\[ \tmf_0(2)_{(3)} \wedge T  \simeq \tmf_0(2)_{(3)} \vee \Sigma^4\tmf_0(2)_{(3)}  \vee \Sigma^8 \tmf_0(2)_{(3)}.\]
We can assume that  the map  $\tmf_{(3)} \to \tmf_0(2)_{(3)}$ factors in the derived category of $\tmf_{(3)}$-modules as 
\[\begin{tikzcd}
\tmf_{(3)} \ar{r} & \tmf_{(3)} \wedge \cone(\alpha_1) \ar{r} & \tmf_{(3)} \wedge T \ar{r}{\simeq} & \tmf_0(2)_{(3)} 
\end{tikzcd}\]
This implies that the inclusion in the first smash factor 
\[\eta_L\colon  \tmf_0(2)_{(3)} \to \tmf_0(2)_{(3)} \wedge_{\tmf_{(3)}} \tmf_0(2)_{(3)} \]
is given by 
\[\begin{tikzcd}
    \tmf_0(2)_{(3)} \ar{r} & \tmf_0(2)_{(3)} \wedge \cone(\alpha_1) \ar{r} & \tmf_0(2)_{(3)} \wedge T
    \ar{r}{\simeq} & \tmf_0(2)_{(3)} \wedge_{\tmf_{(2)}} \tmf_0(2)_{(3)}.
\end{tikzcd}\] 
We obtain that the map 
\[\begin{tikzcd}
    \tmf_0(2)_{(3)} \wedge \cone(\alpha_1) \ar{r} & \tmf_0(2)_{(3)} \wedge T \simeq \tmf_0(2)_{(3)}
    \wedge_{\tmf_{(3)}} \tmf_0(2)_{(3)} \ar{r} & \tmf_0(2)_{(3)}
\end{tikzcd}\]
is a left inverse  for $\tmf_0(2)_{(3)} \to \tmf_0(2)_{(3)} \wedge
\cone(\alpha_1)$. 
It is also clear that the inclusion in the second smash factor $\eta_R \colon
\tmf_0(2)_{(3)} \to \tmf_0(2)_{(3)} \wedge_{\tmf_{(3)}} \tmf_0(2)_{(3)}$  is given by
\[\begin{tikzcd}
\tmf_0(2)_{(3)} \ar{r}{\simeq}  & \tmf_{(3)} \wedge T \ar{r} &  \tmf_0(2)_{(3)} \wedge T \ar{r}{\simeq}  & \tmf_0(2)_{(3)} \wedge_{\tmf_{(3)}} \tmf_0(2)_{(3)}.
\end{tikzcd}\]
We claim that  
\[ \eta_R(a_2) \in \pi_4(\tmf_0(2)_{(3)} \wedge_{\tmf_{(3)}} \tmf_0(2)_{(3)})
  \cong \pi_4(\tmf_0(2)_{(3)} \wedge T) \cong \pi_4(\tmf_0(2)_{(3)} \wedge \cone(\alpha_1)) \] 
 maps to $3$ times a unit under 
 \[ \delta_4 \colon  \pi_4(\tmf_0(2)_{(3)} \wedge \cone(\alpha_1)) \to \pi_4(\Sigma^4 \tmf_0(2)_{(3)}) \cong \mathbb{Z}_{(3)}.\]
 By commutativity of the diagram
 \[ \begin{tikzcd}
     \pi_4(\tmf_0(2)_{(3)} \wedge T)  & \pi_4(\tmf_0(2)_{(3)} \wedge \cone(\alpha_1))
     \ar{l}[swap]{\cong} \ar{r}{\delta_4} & \pi_4(\Sigma^4 \tmf_0(2)_{(3)})  \\
     \pi_4(\tmf_{(3)} \wedge T) \ar{u}{\eta_R}  & \pi_4(\tmf_{(3)} \wedge \cone(\alpha_1)) \ar{r}
     \ar{l}[swap]{\cong} \ar{u} &  \pi_4(\Sigma^4\tmf_{(3)})  \ar{u} 
\end{tikzcd} \] 
it suffices to show that $a_2 \in \pi_4(\tmf_{(3)} \wedge T)$ maps to $3$ times a unit
under the bottom map. 
This follows by the exact sequence 
\[\begin{tikzcd} 
    \pi_4(\tmf_{(3)}) \ar{r}  \ar{d}{\cong} & \pi_4(\tmf_{(3)} \wedge \cone(\alpha_1))  \ar{r} 
    \ar{d}{\cong} &  \pi_4(\Sigma^4 \tmf_{(3)})  \ar{r} \ar{d}{\cong}  &
    \pi_4(\Sigma \tmf_{(3)}) \ar{d}{\cong}  \\
    0   \ar{r}  & \mathbb{Z}_{(3)}  \ar{r}   & \mathbb{Z}_{(3)}   \ar{r} &
    \mathbb{Z}/{3\mathbb{Z}}\{\alpha_1\}. 
\end{tikzcd} \] 
We define $r$ to be the unique element in $\pi_4(\tmf_0(2)_{(3)} \wedge \cone(\alpha_1)) $ that maps to
that unit under $\delta_4$ and  that is in the kernel of the composition of 
$\pi_4(\tmf_0(2)_{(3)} \wedge \cone(\alpha_1))  \cong  \pi_4(\tmf_0(2)_{(3)} \wedge T)$ and the multiplication map 
\[ \pi_4(\tmf_0(2)_{(3)} \wedge T) \cong \pi_4(\tmf_0(2)_{(3)} \wedge_{\tmf_{(3)}} \tmf_0(2)_{(3)}) \to \pi_4(\tmf_0(2)_{(3)}). \] 
We have that $3r- \eta_R(a_2)$ is in the image of  
$\pi_4(\tmf_0(2)_{(3)}) \to \pi_4(\tmf_0(2)_{(3)} \wedge \cone(\alpha_1))$ and thus can be written as
$3r- \eta_R(a_2)  = n \cdot a_2$ for an $n \in \mathbb{Z}_{(3)}$.  Applying the map 
\[ \pi_4(\tmf_0(2)_{(3)} \wedge \cone(\alpha_1))  \cong \pi_4(\tmf_0(2)_{(3)} \wedge T)  \cong
  \pi_4(\tmf_0(2)_{(3)} \wedge_{\tmf_{(3)}} \tmf_0(2)_{(3)}) \to  \pi_4(\tmf_0(2)_{(3)}) \]
gives $n = -1$.

We claim that $\eta_R(a_4) \in \pi_8(\tmf_0(2)_{(3)} \wedge T)$ maps to $3$ times a unit
under 
\[\Delta_8 \colon  \pi_8(\tmf_0(2)_{(3)} \wedge T) \to \pi_8(\Sigma^8\tmf_0(2)_{(3)}). \] 
As above one sees that it suffices to show that $a_4$ maps to $3$ times a unit under the
map
$\pi_8(\tmf_{(3)} \wedge T) \to \pi_8(\Sigma^8 \tmf_{(3)})$. 
For this we consider the exact sequence 
\[\begin{tikzcd}
\pi_8(\tmf_{(3)} \wedge \cone(\alpha_1)) \ar{r} & \pi_8(\tmf_{(3)} \wedge T) \ar{r} & \pi_8(\Sigma^8\tmf_{(3)}) \ar{r} & \pi_8(\Sigma \tmf_{(3)}  \wedge \cone(\alpha_1)). \\
 & \mathbb{Z}_{(3)}\{a_2^2\} \oplus \mathbb{Z}_{(3)}\{a_4\} \ar{u}{\cong} & \mathbb{Z}_{(3)} \ar{u}{\cong} & 
\end{tikzcd} \]
Using that $\pi_4(\tmf_{(3)}) = 0 = \pi_5(\tmf_{(3)})$ one gets that 
$\pi_8(\tmf_{(3)}) \cong \pi_8(\tmf_{(3)} \wedge \cone(\alpha_1))$, and under this isomorphism the
first map in the exact sequence identifies with 
\[\pi_8(\tmf_{(3)}) \cong \mathbb{Z}_{(3)}\{c_4\} \to \pi_8(\tmf_0(2)_{(3)}) \cong \mathbb{Z}_{(3)}\{a_2^2\} \oplus \mathbb{Z}_{(3)}\{a_4\}, \quad  c_4 \mapsto 16a_2^2- 48a_4.\] 
As $\pi_6(\tmf_{(3)}) = 0 = \pi_7(\tmf_{(3)})$, one gets that 
$\pi_7(\tmf_{(3)} \wedge \cone(\alpha_1)) \cong \pi_7(\Sigma^4 \tmf_{(3)})$, and under this isomorphism
the third map in the exact sequence identifies with
\[ \pi_8(\Sigma^8\tmf_{(3)}) \cong \mathbb{Z}_{(3)} \to \pi_3(\tmf_{(3)}) \cong \mathbb{Z}/{3\mathbb{Z}}\{\alpha_1\}, \quad 1 \mapsto \alpha_1.\] 
One obtains that the second map in the exact sequence maps  $a_4$ to  $3 \cdot m$  and $a_2^2$ to $9 \cdot m$ for a unit $m \in\mathbb{Z}_{(3)}$.

Since the map $\pi_*(\tmf_{(3)}) \to \pi_*(\tmf_0(2)_{(3)})$ maps $c_4 $ to
$16a_2^2 -48 a_4$, we have the equation 
\[ 16 \cdot a_2^2 - 48 \cdot  a_4 = 16 \cdot \eta_R(a_2)^2-
  48 \cdot \eta_R(a_4) \]
in $\pi_*(\tmf_0(2)_{(3)} \wedge_{\tmf_{(3)}} \tmf_0(2)_{(3)})$. 
Replacing $\eta_R(a_2)$ by $3r + a_2$  and using torsion-freeness 
one gets the equation
\[\eta_R(a_4) = a_4 + 3r^2 + 2 a_2 r.\] 
We apply the map $\Delta_8\colon
\pi_8(\tmf_0(2)_{(3)} \wedge T) \to \pi_8(\Sigma^8 \tmf_0(2)_{(3)})$  to this equation
and obtain by torsion-freeness of $\pi_*(\tmf_0(2)_{(3)})$  
\[ \Delta_8(r^2) = m. \]
We thus have an isomorphism of left $\pi_*(\tmf_0(2)_{(3)})$-modules 
\[ \pi_*(\tmf_0(2)_{(3)} \wedge_{\tmf_{(3)}} \tmf_0(2)_{(3)}) \cong
  \pi_*(\tmf_0(2)_{(3)}) \oplus \pi_*(\tmf_0(2)_{(3)})\{r\} \oplus \pi_*(\tmf_0(2)_{(3)}) \{r^2\}.\] 
Since the map $\pi_*(\tmf_{(3)}) \to \pi_*(\tmf_0(2)_{(3)})$ maps $c_6$ to 
$-64 a_2^3 + 288 a_2 a_4$,  
 we have 
\[ -64 \cdot a_2^3 + 288 \cdot a_2\cdot  a_4 = -64 \cdot \eta_R(a_2)^3 + 288 \cdot  \eta_R(a_2)\cdot \eta_R(a_4) \]
in $\pi_*(\tmf_0(2)_{(3)} \wedge_{\tmf_{(3)}} \tmf_0(2)_{(3)})$. 
Replacing $\eta_R(a_2)$ by $3r + a_2$ and $\eta_R(a_4)$ by $a_4 + 3r^2 + 2a_2 r$ and using torsion-freeness one gets 
\[ r^3 + a_2 r^2 + a_4 r = 0.\] 
This implies the  lemma. 
\end{proof}

  \begin{thm}
    The relative $\THH$-spectrum $\THH^{\tmf_{(3)}}(\tmf_0(2)_{(3)})$ is
    not trivial. More precisely, 
  \begin{align*} \THH_*^{\tmf_{(3)}}(\tmf_0(2)_{(3)}) & \cong \Z_{(3)}[a_2,a_4]
                                            \oplus \bigoplus_{i \geq 
                                            0}    \Sigma^{14i +5}
                                            \Z_{(3)}[a_2] \\ 
    & \cong \pi_*\tmf_0(2) \oplus \bigoplus_{i \geq 0}    \Sigma^{14i
      +5} \pi_*\tmf_0(2)/(a_4).  \end{align*}
  \end{thm}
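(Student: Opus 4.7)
The proof uses the K\"unneth bar spectral sequence
\[
E^2_{p,q} = \Tor^{B}_{p,q}\bigl(\pi_*\tmf_0(2)_{(3)},\,\pi_*\tmf_0(2)_{(3)}\bigr) \Longrightarrow \pi_{p+q}\THH^{\tmf_{(3)}}(\tmf_0(2)_{(3)}),
\]
with $B = \pi_*\bigl(\tmf_0(2)_{(3)}\wedge_{\tmf_{(3)}}\tmf_0(2)_{(3)}\bigr)$ as in Lemma~\ref{relsmash1}. Writing $A := \pi_*\tmf_0(2)_{(3)} = \Z_{(3)}[a_2,a_4]$ and $g := r^2+a_2r+a_4$, Lemma~\ref{relsmash1} gives $B = A[r]/(rg)$ and $A = B/(r)$ via the multiplication $\mu\colon B\to A$, with $|r| = 4$ and $|g| = 8$. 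Since $B$ is free of rank three over $A$, the Künneth spectral sequence is well defined.

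The key computational input is the Eisenbud matrix factorization of the hypersurface relation $rg = r^3+a_2r^2+a_4r$. A direct check using the free $A$-basis $\{1,r,r^2\}$ of $B$ shows that $\mathrm{ann}_B(r) = A\cdot g$ and $\mathrm{ann}_B(g) = A\cdot r+A\cdot r^2$, producing the $2$-periodic free $B$-resolution
\[
\cdots \xrightarrow{\cdot g} \Sigma^{16}B \xrightarrow{\cdot r} \Sigma^{12}B \xrightarrow{\cdot g} \Sigma^{4}B \xrightarrow{\cdot r} B \to A \to 0,
\]
with generators $e_{2k}$ and $e_{2k+1}$ in internal degrees $12k$ and $12k+4$ respectively. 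Tensoring over $B$ with $A$ sends $r\mapsto 0$ and $g\mapsto a_4$, so the alternating differentials become $0$ and $\cdot a_4$; the resulting homology is $\Tor^B_0(A,A) = A$, $\Tor^B_{2k}(A,A) = 0$ for $k\geq 1$, and $\Tor^B_{2k+1}(A,A)\cong\Sigma^{12k+4}\Z_{(3)}[a_2]$. Each odd row contributes to $\pi_*\THH$ in total degrees $(2k+1)+(12k+4)+4j = 14k+5+4j$, which reproduces exactly the shifts $\Sigma^{14i+5}$ appearing in the theorem.

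Finally, one shows that $E^2 = E^\infty$ with split extensions. A differential $d_r\colon E_r^{2k+1,*}\to E_r^{2k'+1,*}$ between two odd rows would require $4j' = 14(k-k') - 1 + 4j$ for some integer $j'\geq 0$; but modulo $4$ this reads $2(k-k')\equiv 1\pmod 4$, which is impossible since the left-hand side is even. Any potential differential landing in the bottom row $E^{0,*}_r = A$ is ruled out because the unit map $\tmf_0(2)_{(3)}\to \THH^{\tmf_{(3)}}(\tmf_0(2)_{(3)})$ splits the augmentation and exhibits $A$ as a direct summand of $\pi_*\THH^{\tmf_{(3)}}(\tmf_0(2)_{(3)})$. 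The various $E^\infty$-rows land in pairwise disjoint congruence classes modulo $4$ (the $\Tor^B_0$-part contributes in degrees $\equiv 0$, the odd $\Tor$-rows in degrees $\equiv 1$ or $\equiv 3$), and each contribution is $\Z_{(3)}$-torsion-free, so all extension problems split. The main technical point is the careful bookkeeping of internal degrees in the Eisenbud resolution, which produces the period $|r|+|g|+2 = 14$ visible in the final formula.
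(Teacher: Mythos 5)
Your proof follows the same strategy as the paper: run the K\"unneth/Tor spectral sequence with coefficient ring $B = \pi_*(\tmf_0(2)_{(3)}\wedge_{\tmf_{(3)}}\tmf_0(2)_{(3)}) = \Z_{(3)}[a_2,a_4,r]/(r^3+a_2r^2+a_4r)$ from Lemma~\ref{relsmash1}, construct the $2$-periodic free resolution alternating $\cdot r$ and $\cdot(r^2+a_2r+a_4)$, observe that tensoring down to $A = \Z_{(3)}[a_2,a_4]$ turns these into $0$ and $\cdot a_4$, and deduce the odd rows $\Tor^B_{2k+1}\cong\Sigma^{12k+4}\Z_{(3)}[a_2]$ giving the $\Sigma^{14k+5}\Z_{(3)}[a_2]$ summands. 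The collapse argument (positive-filtration classes in odd total degree cannot support differentials between themselves, and the unit-split edge map protects filtration zero) and the no-extensions argument ($E^\infty$ free over $\Z_{(3)}$) also match the paper. Your extra touches — naming the resolution as an Eisenbud matrix factorization of the hypersurface $rg=0$ and explicitly verifying $\mathrm{ann}_B(r) = (g)$ and $\mathrm{ann}_B(g) = (r)$ on the $A$-basis $\{1,r,r^2\}$ — are a helpful gloss rather than a different route. One small wrinkle: the claim that the $E^\infty$-rows occupy \emph{pairwise disjoint} congruence classes mod~$4$ is not literally true, since rows $2k+1$ and $2k'+1$ with $k\equiv k'\pmod 2$ contribute in the same residue class and can overlap in total degree; but this does not affect the conclusion, as freeness of the $E^\infty$-page over $\Z_{(3)}$ already settles all extensions, exactly as the paper argues.
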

\begin{proof}
We use the Tor spectral sequence
\[ E^2_{*,*} = \Tor_{*,*}^{\pi_*(\tmf_0(2)_{(3)} \wedge_{\tmf_{(3)}}
    \tmf_0(2)_{(3)})}(\pi_*\tmf_0(2)_{(3)},\pi_*\tmf_0(2)_{(3)})  \Rightarrow 
  \pi_*\THH^{\tmf_{(3)}}(\tmf_0(2)_{(3)}) \]
in order to calculate relative topological Hochschild homology. 
For determining
\[ \Tor_{*,*}^{\Z_{(3)}[a_2,a_4,r]/I}(\Z_{(3)}[a_2,a_4],
\Z_{(3)}[a_2,a_4])\]  
  we consider the  free 
  resolution of $\Z_{(3)}[a_2,a_4]$ as a
  $\Z_{(3)}[a_2,a_4,r]/I$-module
  \[ \xymatrix@1{\ldots \ar[r] &
      \Sigma^{12}\Z_{(3)}[a_2,a_4,r]/I
      \ar[rrr]^{r^2+a_2r+a_4}&& & \Sigma^4\Z_{(3)}[a_2,a_4,r]/I  \ar[r]^r &  \Z_{(3)}[a_2,a_4,r]/I.} \] 
 Applying $(-) \otimes_{\Z_{(3)}[a_2,a_4,r]/I} \Z_{(3)}[a_2,a_4]$ yields
 \[ \xymatrix@1{\ldots \ar[r] &
      \Sigma^{12}\Z_{(3)}[a_2,a_4] \ar[r]^{a_4} &
      \Sigma^4\Z_{(3)}[a_2,a_4] \ar[r]^0 &  \Z_{(3)}[a_2,a_4]} \]
and hence we get      
      
  \[ E^2_{n,*} =  \begin{cases}
              \pi_*(\tmf_0(2)_{(3)}),    & n = 0; \\
              \Sigma^{4 +12k} \mathbb{Z}_{(3)}[a_2],  &n = 2k+1, k \geq 0;  \\
              0,   & \text{otherwise}.
             \end{cases}
           \]
We note that all non-trivial classes in positive filtration degree have an odd total 
degree. Since the edge morphism $\pi_*(\tmf_0(2)_{(3)}) \to
\THH^{\tmf_{(3)}}_*(\tmf_0(2)_{(3)})$ is the unit, the classes in filtration degree zero
cannot be hit by a differential and  the spectral seqence collapses at the $E^2$-page. 
Since $E^2_{n,m} = E^\infty_{n,m}$ is a free $\mathbb{Z}_{(3)}$-module for all $n,m$,
there are no additive extensions. 

\end{proof}


As for the connective covers we have an equivalence of $\Tmf_{(3)}$-modules
$\Tmf_{(3)} \wedge T \simeq \Tmf_0(2)$ \cite[\S 4.6]{mathewh}  
such that the map $\Tmf_{(3)} \to \Tmf_0(2)_{(3)}$ factors in the derived category of
$\Tmf_{(3)}$-modules as  
\[ \Tmf_{(3)} \to \Tmf_{(3)} \wedge \cone(\alpha_1) \to \Tmf_{(3)} \wedge T \simeq \Tmf_0(2)_{(3)}. \]  
Using the gap theorem, one can argue 
analogously to the proof of  Lemma \ref{relsmash1} to show that 
\[ \pi_*(\Tmf_0(2)_{(3)} \wedge_{\Tmf_{(3)}} \Tmf_0(2)_{(3)})
  \cong \pi_*\Tmf_0(2)_{(3)}[r]/{(r^3 + a_2r^2 + a_4r)}.\]

\begin{thm}
There is an additive isomorphism 
\[ \THH^{\Tmf_{(3)}}(\Tmf_0(2)_{(3)}) \cong \pi_*\Tmf_0(2)_{(3)} \oplus
  \bigoplus_{i \geq 0}\Sigma^{14i + 5} \mathbb{Z}_{(3)}[a_2]
  \oplus \bigoplus_{i \geq 1}\Sigma^{14i} \mathbb{Z}_{(3)}\{\frac{1}{a_2^i a_4}\}. \]  
\end{thm}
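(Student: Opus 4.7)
The plan is to run the Tor spectral sequence
\[ E^2_{s,t} = \Tor_{s,t}^{\pi_*(\Tmf_0(2)_{(3)} \wedge_{\Tmf_{(3)}} \Tmf_0(2)_{(3)})}\bigl(\pi_*\Tmf_0(2)_{(3)},\, \pi_*\Tmf_0(2)_{(3)}\bigr) \Rightarrow \pi_{s+t}\THH^{\Tmf_{(3)}}(\Tmf_0(2)_{(3)}) \]
in exact parallel with the preceding connective-case theorem, now over
$A' := \pi_*\bigl(\Tmf_0(2)_{(3)} \wedge_{\Tmf_{(3)}} \Tmf_0(2)_{(3)}\bigr) \cong \pi_*\Tmf_0(2)_{(3)}[r]/(r^3 + a_2 r^2 + a_4 r)$
as identified just above. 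Setting $M := \pi_*\Tmf_0(2)_{(3)}$, I would reuse the same periodic free resolution
\[ \cdots \to \Sigma^{12}A' \xr{r^2+a_2r+a_4} \Sigma^4 A' \xr{r} A' \]
of $M$ over $A'$ that was used in the previous theorem. Its exactness rests only on the identities $\mathrm{ann}_{A'}(r) = (r^2+a_2r+a_4)A'$ and $\mathrm{ann}_{A'}(r^2+a_2r+a_4) = rA'$; these follow from the decomposition $A' = M \oplus Mr \oplus Mr^2$ together with the fact that $r$ is a non-zero-divisor in $M[r]$, and in particular do not require $M$ itself to be a domain.

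Tensoring with $M$ along the augmentation $r \mapsto 0$ turns the $r$-differentials into zero and the $(r^2+a_2r+a_4)$-differentials into multiplication by $a_4$, yielding
\[ E^2_{s,*} \cong \begin{cases} M, & s=0, \\ \Sigma^{4+12k}\,(M/a_4 M), & s = 2k+1,\ k\geq 0, \\ \Sigma^{12+12k}\,\ker(a_4 \colon M \to M), & s = 2k+2,\ k\geq 0. \end{cases} \]
The central new ingredient compared to the connective case is describing multiplication by $a_4$ on the full $M = \pi_*\Tmf_0(2)_{(3)}$. On the nonnegative part $\Z_{(3)}[a_2,a_4]$ the map is injective with cokernel $\Z_{(3)}[a_2]$, exactly as before. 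On the negative part, whose $\pi_*\Tmf_0(2)_{(3)}$-module structure was recalled just before Lemma \ref{relsmash1}, the formula $a_4\cdot\frac{1}{a_2^n a_4^m} = \frac{1}{a_2^n a_4^{m-1}}$ (vanishing when $m=1$) shows that $a_4$ is surjective there (so the cokernel picks up nothing in negative degrees) and that its kernel is exactly the $\Z_{(3)}$-span of the classes $\frac{1}{a_2^n a_4}$ for $n \geq 1$. After converting from the $(s,t)$-bigrading to total degree, the $\Tor_{2k+1}$-terms produce the summands $\Sigma^{14i+5}\Z_{(3)}[a_2]$ with $i=k$, while the $\Tor_{2k+2}$-terms produce the summands involving $\frac{1}{a_2^i a_4}$ shifted by $\Sigma^{14i}$ with $i = k+1 \geq 1$.

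Collapse and the absence of additive extensions follow verbatim from the previous theorem. All classes in positive filtration lie in odd total degree ($14k+5+4j$ in filtration $2k+1$, $14k+5-4n$ in filtration $2k+2$), while the edge morphism is the unit $\pi_*\Tmf_0(2)_{(3)} \to \pi_*\THH^{\Tmf_{(3)}}(\Tmf_0(2)_{(3)})$ and therefore injective, so differentials from positive filtration cannot target $E^r_{0,*}$. Freeness of every $E^2_{s,*}$ over $\Z_{(3)}$ rules out extension problems. The hardest part is the careful identification of $\ker(a_4)$ on the negative part of $M$ — here one really needs the explicit fiber-sequence presentation of $\Tmf_0(2)_{(3)}$ recalled at the start of \ref{subsec:thh} in order to be certain which classes are torsion — together with the degree bookkeeping that converts the bigraded Tor output into the form displayed in the theorem statement.
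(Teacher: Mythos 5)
Your proposal is correct and follows essentially the same approach as the paper: the identical Tor spectral sequence, the same periodic free resolution of $\pi_*\Tmf_0(2)_{(3)}$ over $\pi_*(\Tmf_0(2)_{(3)} \wedge_{\Tmf_{(3)}} \Tmf_0(2)_{(3)})$, the same computation of the kernel and cokernel of multiplication by $a_4$ using the explicit description of the negative-degree homotopy, and the same odd-total-degree collapse and $\mathbb{Z}_{(3)}$-freeness arguments. Your explicit verification that the periodic resolution is exact (via the annihilator identities, emphasizing that $M$ need not be a domain) is a small useful addition the paper leaves implicit.
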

\begin{proof}
  As above we have the following free resolution of $\pi_*(\Tmf_0(2)_{(3)})$ as
  a module over 
\[ C_* =   \pi_*(\Tmf_0(2)_{(3)} \wedge_{\Tmf_{(3)}} \Tmf_0(2)_{(3)}): \] 
\[ \begin{tikzcd}[column sep = large]
\dots  \ar{r}{r} & \Sigma^{12}C_* \ar{rr}{r^2 + a_2 r + a_4} & &  \Sigma^4 C_* \ar{r}{r} & C_* \ar{r} & \pi_*\Tmf_0(2)_{(3)}   \ar{r} & 0. 
\end{tikzcd}  \] 
We get that the $E^2$-page of the Tor spectral sequence
\[ E^2_{*,*} = \Tor^{C_*}_{*,*}(\pi_*\Tmf_0(2)_{(3)}, \pi_*\Tmf_0(2)_{(3)}) \Longrightarrow \pi_*\THH^{\Tmf_{(3)}}(\Tmf_0(2)_{(3)}) \] 
is given by 
\begin{align*}
 E^2_{n, *} & =    \begin{cases} 
                        \pi_*\Tmf_0(2)_{(3)},  &  n = 0; \\
                             \Sigma^{4+12k}   \pi_*\Tmf_0(2)_{(3)}/{a_4} ,  & n = 2k  + 1, k \geq 0;  \\
                         \ker\bigl( \Sigma^{12k} \pi_*\Tmf_0(2)_{(3)} \xrightarrow{\cdot a_4} \Sigma^{12(k-1) +4} \pi_*\Tmf_0(2)_{(3)}\bigr), & n = 2k, k  > 0 
                      \end{cases}  \\ 
 & =  \begin{cases} 
 \pi_*\Tmf_0(2)_{(3)},  &  n = 0;   \\
 \Sigma^{4+12k} \mathbb{Z}_{(3)}[a_2], & n = 2k+1, k \geq 0; \\
 \Sigma^{12k} \bigoplus_{n \geq 1} \mathbb{Z}_{(3)}\{\frac{1}{a_2^n a_4}\}, & n = 2k, k > 0. 
 \end{cases}   
  \end{align*}
  Since all non-trivial classes in positive filtration   have an odd total degree,
  the spectral sequence collapses at the $E^2$-page. 
There are no additive extensions, because the $E^{\infty} = E^2$-page is a free 
$\mathbb{Z}_{(3)}$-module.  
\end{proof}
\begin{thm}
We have an additive isomorphism 
\[ \pi_*\THH^{\tmf_0(2)_{(3)}}(\tmf(2)_{(3)}) \cong
  \mathbb{Z}_{(3)}[\lambda_1, \lambda_2]  
\oplus  \bigoplus_{i \geq 0} \Sigma^{10i + 5}\mathbb{Z}_{(3)}[\lambda_1] .\] 
\end{thm}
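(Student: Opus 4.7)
The strategy follows the template of Lemma \ref{relsmash1} and the two preceding theorems: compute $\pi_*(\tmf(2)_{(3)} \wedge_{\tmf_0(2)_{(3)}} \tmf(2)_{(3)})$ explicitly, resolve $\pi_*\tmf(2)_{(3)}$ over it by a short periodic complex, and read off $\pi_*\THH$ from the Tor spectral sequence after ruling out differentials and extensions by parity.

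To start, $\pi_*\tmf(2)_{(3)} = \Z_{(3)}[\lambda_1, \lambda_2]$ is free of rank $2$ over $\pi_*\tmf_0(2)_{(3)} = \Z_{(3)}[a_2, a_4]$ on the basis $\{1, \lambda_1\}$ (using $a_2 = -(\lambda_1+\lambda_2)$ and $a_4 = \lambda_1\lambda_2$). Lifting $\lambda_1 \in \pi_4\tmf(2)_{(3)}$ to a $\tmf_0(2)_{(3)}$-module map $\Sigma^4\tmf_0(2)_{(3)} \to \tmf(2)_{(3)}$ and combining with the unit yields an equivalence of $\tmf_0(2)_{(3)}$-modules
\[ \tmf_0(2)_{(3)} \vee \Sigma^4 \tmf_0(2)_{(3)} \xrightarrow{\simeq} \tmf(2)_{(3)}. \]
Smashing over $\tmf_0(2)_{(3)}$ then shows that
\[ C_* := \pi_*\bigl(\tmf(2)_{(3)} \wedge_{\tmf_0(2)_{(3)}} \tmf(2)_{(3)}\bigr) \]
is a free $\pi_*\tmf(2)_{(3)}$-module on $\{1, y\}$, where $y = \eta_R(\lambda_1)$ has degree $4$. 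Applying $\eta_R$ to the identity $\lambda_1^2 + a_2\lambda_1 + a_4 = 0$ in $\pi_*\tmf(2)_{(3)}$ forces $y^2 - (\lambda_1+\lambda_2)y + \lambda_1\lambda_2 = 0$, so
\[ C_* \cong \pi_*\tmf(2)_{(3)}[y]/(y-\lambda_1)(y-\lambda_2). \]

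Next I build a two-periodic free $C_*$-resolution of $\pi_*\tmf(2)_{(3)} = C_*/(y-\lambda_1)$. A direct check using that $C_*$ is free on $\{1, y\}$ over $\pi_*\tmf(2)_{(3)}$ shows that the annihilator of $y-\lambda_1$ in $C_*$ is the principal ideal $(y-\lambda_2)$, and vice versa, so the differentials can be taken to alternate between multiplication by $y-\lambda_1$ and by $y-\lambda_2$, with the first being multiplication by $y-\lambda_1$. Tensoring with $\pi_*\tmf(2)_{(3)}$ (so that $y$ acts as $\lambda_1$) replaces these by $0$ and multiplication by the non-zero divisor $\lambda_1-\lambda_2$, alternately. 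Using $\Z_{(3)}[\lambda_1, \lambda_2]/(\lambda_1 - \lambda_2) \cong \Z_{(3)}[\lambda_1]$ one obtains
\[ E^2_{n,*} = \begin{cases} \pi_*\tmf(2)_{(3)}, & n = 0; \\ \Sigma^{4(2k+1)}\Z_{(3)}[\lambda_1], & n = 2k+1,\ k \geq 0; \\ 0, & \text{otherwise.} \end{cases} \]
The bottom class of the summand in filtration $2k+1$ has total degree $5(2k+1) = 10k+5$, matching the claim after reindexing $k = i$.

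Finally, collapse and the absence of additive extensions follow by the parity argument used in the preceding theorems. Every non-trivial class in positive filtration has odd total degree $10k + 5 + 4j$, while filtration-zero classes lie in degrees divisible by $4$. A differential $d_r$ lowers total degree by $1$: for even $r$ the target sits in positive odd filtration with even total degree (hence is zero), and for odd $r$ the only non-zero potential target lies in filtration $0$, but such classes cannot be hit because the edge homomorphism $\pi_*\tmf(2)_{(3)} \to \pi_*\THH^{\tmf_0(2)_{(3)}}(\tmf(2)_{(3)})$ is the unit map and is split injective. Hence $E^\infty = E^2$ is a sum of free $\Z_{(3)}$-modules, so no additive extensions occur, and the claimed isomorphism follows. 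The main technical obstacle is producing the wedge decomposition of $\tmf(2)_{(3)}$ over $\tmf_0(2)_{(3)}$ and extracting the correct quadratic relation on $y = \eta_R(\lambda_1)$; once $C_*$ is pinned down, the resolution and collapse follow the established template.
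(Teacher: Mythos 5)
Your proposal is correct and follows essentially the same route as the paper: identify $\pi_*\tmf(2)_{(3)}$ as free of rank $2$ over $\pi_*\tmf_0(2)_{(3)}$, compute $C_*=\pi_*(\tmf(2)_{(3)}\wedge_{\tmf_0(2)_{(3)}}\tmf(2)_{(3)})$ as a quadratic extension, build the two-periodic free resolution, and collapse the Tor spectral sequence by parity together with the edge map. The only cosmetic difference is your choice of generator $y=\eta_R(\lambda_1)$ (so $C_*\cong\pi_*\tmf(2)_{(3)}[y]/(y-\lambda_1)(y-\lambda_2)$) where the paper uses $a=\eta_R(\lambda_1)-\lambda_1$; these give the identical presentation and identical resolution.
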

\begin{proof}
The map $\pi_*\tmf_0(2)_{(3)} \to \pi_*\tmf(2)_{(3)}$ is given by
$\mathbb{Z}_{(3)}[\lambda_1 + \lambda_2, \lambda_1\lambda_2] \to
\mathbb{Z}_{(3)}[\lambda_1, \lambda_2]$.  One easily sees that 
\[ \mathbb{Z}_{(3)}[\lambda_1, \lambda_2] \cong
  \mathbb{Z}_{(3)}[\lambda_1 + \lambda_1, \lambda_1\lambda_2]
  \oplus \mathbb{Z}_{(3)}[\lambda_1 + \lambda_2, \lambda_1\lambda_2]\lambda_1,\] 
so $\pi_*\tmf(2)_{(3)}$ is a free $\pi_*\tmf_0(2)_{(3)}$-module. We get 
\begin{align*}
  \pi_*(\tmf(2)_{(3)} \wedge_{\tmf_0(2)_{(3)}} \tmf(2)_{(3)}) & = 
\pi_*\tmf(2)_{(3)} \otimes_{\pi_*\tmf_0(2)_{(3)}} \pi_*\tmf(2)_{(3)} \\
 & =   \mathbb{Z}_{(3)}[\lambda_1, \lambda_2, a]/{a^2  + \lambda_1 a - \lambda_2 a},
 \end{align*}
 where $a = \eta_R(\lambda_1) - \lambda_1$.  Let $C_* =  \mathbb{Z}_{(3)}[\lambda_1, \lambda_2, a]/{a^2  + \lambda_1 a - \lambda_2 a}$. We have the following free resolution of $\pi_*\tmf(2)_{(3)}$ as a $C_*$-module: 
 \[\begin{tikzcd}[column sep = large] 
     \dots \ar{r}{\cdot a} & \Sigma^{8}C_* \ar{rr}{\cdot (a + \lambda_1 - \lambda_2)} &
     & \Sigma^4C_* \ar{r}{\cdot a} & C_* \ar{r} & \pi_*\tmf(2)_{(3)} \ar{r} & 0 
 \end{tikzcd}\]
 Thus, the $E^2$-page of the Tor spectral sequence 
 \[ E^2_{*,*} = \Tor^{C_*}_{*,*}(\pi_*\tmf(2)_{(3)}, \pi_*\tmf(2)_{(3)}) \Longrightarrow
   \pi_*\THH^{\tmf_0(2)_{(3)}}(\tmf(2)_{(3)}) \] 
 is given by 
\[ E^2_{n, *} = \begin{cases}
                        \pi_*\tmf(2)_{(3)},  & n = 0; \\
                        \Sigma^{8k+4}\pi_*\tmf(2)_{(3)}/{(\lambda_1- \lambda_2)}, &
                        n = 2k+1, k \geq 0;  \\
                        0, & \text{ otherwise}. 
                      \end{cases} \]
Since the non-trivial classes in positive filtration  have odd total degree, the  
spectral sequence collapses at the $E^2$-page.  There are no additive extensions, because
the $E^2 = E^{\infty}$-page is a free $\mathbb{Z}_{(3)}$-module. 
\end{proof}
  \subsection{The discriminant map} \label{subsec:discriminant}
If $A \ra B$ is a $G$-Galois extension for a finite group $G$, then
the discriminant map $\mathfrak{d}_{B|A} \colon B \ra F_A(B,A)$ is a
weak equivalence \cite[Proposition 6.4.7]{rognes}. The map
$\mathfrak{d}_{B|A}$ is right adjoint to the \emph{trace pairing} 
\[ \xymatrix@1{B \wedge_A B \ar[r]^(0.6)\mu &  B \ar[r]^{tr} & A} \] 
where $(A \to B) \circ tr$ is homotopic to $\sum_{g \in G} g$ and $\mu$ is the multiplication map of
$B$. Rognes proposes that the deviation of 
$\mathfrak{d}_{B|A}$ from being a weak equivalence might be used for
measuring ramification. We show in the examples of $\ell_p \ra ku_p$
and $ko \ra ku$ that $\mathfrak{d}$ \emph{does} detect ramification, but it
does not give any information about the type of ramification. 

\begin{prop}
There is a cofiber sequence 
\[ \xymatrix@1{
    ku_p \ar[rr]^(0.4){\mathfrak{d}_{ku_p|\ell_p}} & & 
    F_{\ell_p}(ku_p, \ell_p) \ar[r] &
    \bigvee_{i=1}^{p-2} \Sigma^{-2p+2i+2} H\Z_p.   } \]
\end{prop}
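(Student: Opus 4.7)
The plan is to exploit the Adams splitting of $ku_p$ over $\ell_p$ to write $\mathfrak{d} = \mathfrak{d}_{ku_p|\ell_p}$ as a very sparse matrix of maps between shifted copies of $\ell_p$ and then identify the cofiber summand by summand via the standard $v_1$-cofiber sequence $\Sigma^{2p-2}\ell_p \xrightarrow{v_1} \ell_p \to H\Z_p$. First I would use the free basis $\{1, u, \ldots, u^{p-2}\}$ of $\pi_* ku_p$ over $\pi_*\ell_p = \Z_p[v_1]$ to obtain an equivalence of $\ell_p$-modules $ku_p \simeq \bigvee_{i=0}^{p-2} \Sigma^{2i} \ell_p$, and dually
\[ F_{\ell_p}(ku_p, \ell_p) \simeq \bigvee_{i=0}^{p-2} \Sigma^{-2i} \ell_p, \]
with dual basis elements $e_i^\ast$ in degree $-2i$ characterised by $e_i^\ast(u^j) = \delta_{ij}$.

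Next I would pin down the trace map. After inverting $v_1$ the extension $\ell_p \ra ku_p$ becomes the $C_{p-1}$-Galois extension $L_p \ra KU_p$, with $C_{p-1} \subset \Z_p^\times$ acting on $u$ by multiplication by $(p-1)$-st roots of unity. Averaging the action gives $tr(u^k) = (p-1) u^k$ when $(p-1) \mid k$ and $tr(u^k) = 0$ otherwise; by $\ell_p$-linearity this determines $tr \colon ku_p \to \ell_p$ completely. Since $\mathfrak{d}$ is right adjoint to the trace pairing $\mu \circ tr$, evaluating on basis elements yields
\[ \mathfrak{d}(1) = (p-1)\, e_0^\ast, \qquad \mathfrak{d}(u^k) = (p-1) v_1\, e_{p-1-k}^\ast \quad \text{for } 1 \leq k \leq p-2. \]

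Under the wedge splittings, the $(k,i)$-entry of $\mathfrak{d}$ lies in $[\Sigma^{2k}\ell_p, \Sigma^{-2i}\ell_p]_{\ell_p} \cong \pi_{2(k+i)}(\ell_p)$, which vanishes unless $(p-1) \mid (k+i)$. Since $0 \leq k, i \leq p-2$, the only surviving entries sit at $(0,0)$ (with value the unit $p-1$) and at $(k, p-1-k)$ for $1 \leq k \leq p-2$ (with value $(p-1) v_1$). Each source summand thus maps into at most one target summand and vice versa, so $\mathfrak{d}$ splits as a wedge of $p-1$ maps between pairs of shifted $\ell_p$-modules, and its cofiber is the wedge of the individual cofibers. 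The $(0,0)$ component has trivial cofiber because $p-1 \in \Z_p^\times$; each of the remaining components is, up to that same unit, the $v_1$-multiplication $\Sigma^{2k}\ell_p \xrightarrow{v_1} \Sigma^{2k-(2p-2)}\ell_p$ with cofiber $\Sigma^{2k-2p+2} H\Z_p$. Reindexing $i = k$ produces the claimed wedge $\bigvee_{i=1}^{p-2} \Sigma^{-2p+2i+2} H\Z_p$.

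The main obstacle I expect is the explicit identification of the trace on $\pi_* ku_p$: one must leverage the $C_{p-1}$-Galois description of $L_p \ra KU_p$ after inverting $v_1$ and check that the connective trace is determined by $\ell_p$-linearity from that periodic data. Once this is settled, the remainder is bookkeeping with a permutation-like matrix of maps between shifted copies of $\ell_p$, and the cofiber falls out directly from the cofiber sequence defining $\ell_p/v_1 \simeq H\Z_p$.
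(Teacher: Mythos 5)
Your proof is correct and follows essentially the same route as the paper's: split $F_{\ell_p}(ku_p,\ell_p)$ as a wedge of shifted copies of $\ell_p$ using freeness of $\pi_*ku_p$ over $\pi_*\ell_p$, compute the trace on the basis $\{u^j\}$ via the $C_{p-1}$-action, identify $\mathfrak{d}_{ku_p|\ell_p}$ as a permuted diagonal map with one unit entry $(p-1)$ and $p-2$ entries $(p-1)v_1$, and take cofibers summand by summand using $\ell_p/v_1 \simeq H\Z_p$. The only difference is expository: you make the dual basis, the sparsity-by-degree argument, and the derivation of the trace formula from the $L_p \to KU_p$ Galois description explicit, while the paper states these more tersely.
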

\begin{proof}
  We know that $F_{\ell_p}(ku_p, \ell_p)$ can be decomposed as
\[ F_{\ell_p}(ku_p, \ell_p)  \simeq  F_{\ell_p}(\bigvee_{i=0}^{p-2} \Sigma^{2i}
  \ell_p, \ell_p)  \cong \prod_{i=0}^{p-2} \Sigma^{-2i} \ell_p \simeq
  \bigvee_{i=0}^{p-2} \Sigma^{-2i} \ell_p \] 
and $\mathfrak{d}_{ku_p|\ell_p}$ can be identified with a map 
\[ \bigvee_{i=0}^{p-2} \Sigma^{2i} \ell_p \ra   \bigvee_{i=0}^{p-2} \Sigma^{-2i}
  \ell_p. \]                         
As $\pi_*ku_p$ is a free graded $\pi_*\ell_p$-module, we can calculate
the effect of $\mathfrak{d}_{ku_p|\ell_p}$  algebraically via the
trace pairing: The element $\Sigma^{2i}1 \in  \pi_*\Sigma^{2i} \ell_p$
maps an element $u^i$ to $tr(u^{i+j})$ and this is
\[ tr(u^{i+j}) = \begin{cases} 0, & (p-1) \nmid i+j, \\
    (p-1)u^{i+j}, & (p-1) \mid i+j.\end{cases}\]
Hence on the level of homotopy groups $\mathfrak{d}_{ku_p|\ell_p}$
maps $1 \in \pi_0 \Sigma^0 \ell_p$ to $(p-1)\cdot 1 \in \pi_*\Sigma^0\ell_p =
\pi_*\ell_p$ and it maps $\Sigma^{2i}1 \in  \pi_*\Sigma^{2i} \ell_p$
via multiplication with  $(p-1)v_1 = (p-1)u^{p-1}$ to
$\pi_*\Sigma^{-2p+2i+2} \ell_p$. On the summands $\Sigma^{2i}\ell_p$
we get the following maps: 
\[ \xymatrix@C=1cm@R=0.3cm{
\Sigma^{2p-4}\ell_p \ar[rrrrdddd]^{(p-1)v_1}& &  & & \\
\vdots & & &  &\\
\Sigma^2 \ell_p \ar[rrrrdddd]_{(p-1)v_1}& & &  &  \\
\ell_p \ar[rrrr]^{(p-1)}& & &  &\ell_p \\
& & &  &\Sigma^{-2}\ell_p \\
& & &  &\vdots \\
& & &  &\Sigma^{-2p+4} \ell_p}\] 
As $(p-1)$ is a unit in $\pi_0(\ell_p)$ the cofiber of
\[ \xymatrix@1{ \Sigma^{2i} \ell_p \ar[rrr]^{(p-1)v_1}&& & \Sigma^{-2p+2i+2} \ell_p}\]
is $\Sigma^{-2p+2i+2} H\Z_p$. 
\end{proof}

\bigskip
Note that $ko \simeq \tau_{\geq 0}ku^{hC_2}$, but as the trace map $tr
\colon ku \ra ku^{hC_2}$ has the connective spectrum 
$ku$ as a source, it factors through $\tau_{\geq 0}ku^{hC_2}\simeq ko$
and we obtain a discriminant $\mathfrak{d}_{ku|ko} \colon ku \ra
F_{ko}(ku,ko)$. We fix notation for $\pi_*ko$ as
\[ \pi_*ko = \Z[\eta, y, \omega]/(2\eta,\eta^3,\eta y, y^2-4\omega)\]
with $|\eta| = 1$, $|y|=4$ and $|\omega|=8$.

\begin{prop}
There is a cofiber sequence $\xymatrix@1{ku
  \ar[rr]^(0.4){\mathfrak{d}_{ku|ko}} & & F_{ko}(ku, ko) \ar[r]&
  \Sigma^{-2} H\Z}$.  
\end{prop}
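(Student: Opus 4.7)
My plan is to mirror the strategy of the previous proposition, substituting the Wood cofiber sequence of $ko$-modules
\[ \Sigma ko \xrightarrow{\eta} ko \xrightarrow{c} ku \]
for the free decomposition of $ku_p$ over $\ell_p$ used there (since $ku$ is not a free $ko$-module). First I would apply the contravariant exact functor $F_{ko}(-,ko)$ to this cofiber sequence to obtain the fiber sequence
\[ F_{ko}(ku,ko) \xrightarrow{c^*} ko \xrightarrow{\eta^*} \Sigma^{-1}ko, \]
where $\eta^*$ is multiplication by $\eta$. Shifting the Wood sequence down by one identifies $\cone(\eta^* \colon ko \to \Sigma^{-1}ko) \simeq \Sigma^{-1}ku$, and since fiber coincides with $\Sigma^{-1}$ of cofiber in a stable category, this yields an equivalence $F_{ko}(ku,ko) \simeq \Sigma^{-2}ku$ of $ko$-modules.

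Next I would identify $\mathfrak{d}_{ku|ko}$ under this equivalence. Because the trace pairing is symmetric, the discriminant is $ku$-linear with respect to the precomposition $ku$-action on $F_{ko}(ku,ko)$, and is therefore determined by the single element $\mathfrak{d}_{ku|ko}(1) \in \pi_0 F_{ko}(ku,ko)$. From the long exact sequence associated with the above fiber sequence one sees that $c^*$ embeds $\pi_0 F_{ko}(ku,ko)$ into $\pi_0 ko = \Z$ as the kernel of $\eta\cdot \colon \Z \to \Z/2$, namely $2\Z$. The composite $c^* \circ \mathfrak{d}_{ku|ko}$ equals the trace $tr \colon ku \to ko$, and since $tr \circ c = |C_2|\cdot \id = 2$ we have $tr(1) = 2$, a generator of $2\Z$. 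Hence $\mathfrak{d}_{ku|ko}(1)$ is a generator of $\pi_0 F_{ko}(ku,ko) \cong \Z$, and under $F_{ko}(ku,ko) \simeq \Sigma^{-2}ku$ the discriminant corresponds (up to sign) to multiplication by the Bott class $u \colon ku \to \Sigma^{-2}ku$.

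Finally, the cofiber of $u \colon ku \to \Sigma^{-2}ku$ can be read off immediately from the long exact sequence of homotopy groups: $u$-multiplication is an isomorphism $\pi_n ku \xrightarrow{\cong} \pi_{n+2}ku$ for every $n \geq 0$, while on $\pi_{-2}$ it is the zero map $0 \to \Z$, contributing $\Z$ in degree $-2$ of the cofiber with all other homotopy groups vanishing. This proves $\cone(\mathfrak{d}_{ku|ko}) \simeq \Sigma^{-2}H\Z$ and establishes the claimed cofiber sequence. The main delicate step is the identification of $\mathfrak{d}_{ku|ko}(1)$ as a generator of $\pi_0 F_{ko}(ku,ko) \cong \Z$: this relies on the trace-norm relation $tr \circ c = |C_2|\cdot \id$ together with the identification $\pi_0 F_{ko}(ku,ko) \cong 2\Z$; the remaining steps are formal manipulations with cofiber and fiber sequences.
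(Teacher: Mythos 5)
Your first step coincides with the paper's: dualizing the Wood cofiber sequence over $ko$ to obtain $F_{ko}(ku,ko) \simeq \Sigma^{-2}ku$. From there you diverge: you invoke $ku$-linearity of $\mathfrak{d}_{ku|ko}$ (for the precomposition $ku$-module structure on $F_{ko}(ku,ko)$) plus the identification of $\mathfrak{d}_{ku|ko}(1)=tr$ with a generator of $\pi_0 F_{ko}(ku,ko)$, and try to conclude immediately that $\mathfrak{d}_{ku|ko}$ is multiplication by $u$. The paper instead postcomposes with $c_*\colon F_{ko}(ku,ko)\to F_{ko}(ku,ku)\simeq F_{ku}(ku\wedge_{ko}ku,ku)$ and uses the explicit algebra $\pi_*(ku\wedge_{ko}ku)\cong ku_*[s]/(s^2-su)$ to track the effect of $\mathfrak{d}_{ku|ko}$ on each power $u^i$ (the even powers via restriction along $c$, the odd ones via the $s$-coordinate). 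Your route is shorter and conceptually attractive, but it has a real gap.

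The gap is in the sentence ``under $F_{ko}(ku,ko)\simeq\Sigma^{-2}ku$ the discriminant corresponds (up to sign) to multiplication by the Bott class.'' Dualizing the Wood sequence only produces an equivalence of \emph{$ko$}-modules $F_{ko}(ku,ko)\simeq\Sigma^{-2}ku$; it does not identify the precomposition $ku$-module structure on the left with the standard $ku$-module structure on the right. What your argument actually needs is that $\pi_*F_{ko}(ku,ko)$ (in non-negative degrees) is free of rank one over $\pi_*ku$ on the class $\mathfrak{d}_{ku|ko}(1)$, i.e.\ that $u\cdot(-)\colon \pi_nF_{ko}(ku,ko)\to\pi_{n+2}F_{ko}(ku,ko)$ is an isomorphism for $n\ge 0$. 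That $\mathfrak{d}_{ku|ko}(1)$ generates $\pi_0$ (which you prove cleanly via $c^*$ and the trace-norm relation) does not by itself give this: a priori $u\cdot tr$ could be a proper multiple of the generator of $\pi_2F_{ko}(ku,ko)$, which would change the cofiber. To close the gap you would need to show that the $ko$-linear equivalence is in fact $ku$-linear -- e.g.\ by arguing that $F_{ko}(ku,ko)$ is an invertible $ku$-module and invoking $\mathrm{Pic}(ku)\cong\Z$, or by computing $u\cdot tr$ directly as the paper does via the $F_{ku}(ku\wedge_{ko}ku,ku)$ description. As written, the step you label as ``formal'' is precisely where the substance of the paper's proof lies.
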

\begin{proof}
The cofiber sequence $\xymatrix@1{\Sigma ko \ar[r]^\eta & ko \ar[r]^c &
  ku \ar[r]^\delta&\Sigma^2ko \ar[r]^\eta & \Sigma ko}$ induces a
cofiber sequence 
\[ \xymatrix@1{F_{ko}(\Sigma ko, ko) \ar[r]^{\eta} & F_{ko}(\Sigma^2 ko,
    ko) \ar[r]^{\delta} & F_{ko}(ku, ko) \ar[r]^{c} & F_{ko}(ko,
    ko) \ar[r]^{\eta} & F_{ko}(\Sigma ko, ko)} \]
which is equivalent to
\[ \xymatrix@1{\Sigma^{-1} ko \ar[r]^{\eta} & \Sigma^{-2} ko
    \ar[r]^-{\delta} & F_{ko}(ku, ko) \ar[r]^-{c} & ko \ar[r]^-{\eta}
    & \Sigma^{-1}ko.} \]
This is the $2$-fold desuspension of the cofiber sequence of $ku$ and
hence
\[ F_{ko}(ku, ko) \simeq \Sigma^{-2}ku. \] 

We consider the composition $c_* \circ \mathfrak{d}_{ku|ko} \colon ku \ra
F_{ko}(ku,ko) \ra F_{ko}(ku,ku)$. As $c_*$ is part of the cofiber
sequence 
\[ \xymatrix@1{F_{ko}(ku,\Sigma ko) \ar[r]^{\eta_*} & F_{ko}(ku,ko)
    \ar[r]^{c_*} & F_{ko}(ku,ku) } \] 
and as $\eta$ is trivial on $ku$, we know that $c_*$ induces a monomorphism
on the level of homotopy groups.

As $\mathfrak{d}_{ku|ko}$ is adjoint to the trace pairing, the
composite 
\[ \xymatrix@1{\pi_*ku \ar[r] & \pi_*F_{ko}(ku,ko) \ar[r] &
    \pi_*F_{ko}(ku,ku)}\] can be identified with 
\[ \xymatrix@1{\pi_*ku \ar[r] & \pi_*F_{ko}(ku,ku) \ar[rr]^{(\id +t)_*}
    & & 
    F_{ko}(ku,ku)}\]
where $t$ denotes the generator of $C_2$ and the first map is adjoint
to the multiplication $ku \wedge_{ko} ku \ra ku$.

The target of $c_*$ is $F_{ko}(ku,ku)
\simeq F_{ku}(ku \wedge_{ko} ku, ku)$, and we know by work of the
first author, documented in \cite[Proof of Lemma 0.1]{dlrerratum} that 
\[ \pi_*F_{ku}(ku \wedge_{ko} ku, ku) \cong
  \Hom_{ku_*}(ku_*[s]/(s^2-su), \Sigma^{-*}ku_*), \] so we can control the effect
  of  $c_* \circ \mathfrak{d}_{ku|ko}$ on homotopy groups.

Note that $t$ induces a $ku$-linear map $t_* \colon ku \ra t^*ku$, where $t^*ku$
is the $ku$-module given by restriction of scalars along $t$. 

As $t^2=\id$, we
therefore obtain  
\[ \xymatrix@1{ F_{ko}(ku,ku) \ar[r]^{t_*} & F_{ko}(ku, t^*ku)}\]
and a commutative diagram 
\[ \xymatrix{
F_{ko}(ku, ku) \ar[rr]^t \ar[d]_\simeq & & F_{ko}(ku,ku) \ar[d]^\simeq \\
F_{ku}(ku \wedge_{ko} ku, ku) \ar[rr]^{\beta}& & F_{ku}(ku \wedge_{ko} ku, ku) 
} \]

Here, $\beta$  induces the map on $\pi_*$ that sends an 
$f \colon (ku \wedge_{ko} ku)_* \ra \Sigma^{-i}ku_*$ to 
\[ \xymatrix@1{(ku \wedge_{ko} ku)_* \ar[r]^{(t \wedge \id)_*} & (ku
    \wedge_{ko} ku)_*     \ar[r]^f & \Sigma^{-i}ku_* \ar[r]^t &
    \Sigma^{-i}ku_*}. \]  
If we denote the right unit $\eta_R \colon ku \ra ku \wedge_{ko} ku$
applied to $u$ by $u_r$, then we have the relation $2s +
u_r = u$. As $(t \wedge \id)_*(u) = -u$ and $(t \wedge
\id)_*(u_r) = u_r$, this implies that 
\[ (t \wedge \id)_*(2s) = 2s-2u. \] 
Torsionfreeness then yields $ (t \wedge \id)_*(s) = s-u$. 

The adjoint of the multiplication map $\pi_*ku \ra \pi_*F_{ko}(ku,
ku)$ maps $u^i$ to the map that sends $1$ to $\Sigma^{-2i}u^i$ and $s$
to zero. Therefore, the composite $c_* \circ \mathfrak{d}_{ku|ko}$
maps $u^i$ to the map with values $1 \mapsto
\Sigma^{-2i}(u^i+(-1)^iu^i)$ and
\[   s \mapsto (s-u)u^i \mapsto -t(u^{i+1}) = (-1)^iu^{i+1}. \]
  
In order to understand the effect of $\mathfrak{d}_{ku|ko}$ we
consider the diagram
\[ \xymatrix{\pi_*ku \ar[rrr]^-{(\mathfrak{d}_{ku|ko})_*} & & &
    \pi_*F_{ko}(ku, ko) \cong \pi_{*+2}(ku) \ar[d]_{c^*} \ar[r]^-{c_*}
    & \pi_*F_{ko}(ku,ku) \cong \pi_*(\Sigma^{-2}ku \vee ku)
    \ar[d]_{c^*}\\ 
  &&&\pi_*F_{ko}(ko, ko) \cong \pi_*(ko) \ar[r]^{c_*} &
  \pi_*F_{ko}(ko,ku) \cong \pi_*(ku) }\]
where we can identify $c^* \colon \pi_*F_{ko}(ku, ko) \cong
\pi_{*+2}(ku) \ra \pi_*(ko)$ with $\pi_*\Sigma^{-2}\delta$.

The application of $c^*$ gives the restriction to the unit $c \colon
ko \ra ku$. Say $(\mathfrak{d}_{ku|ko})_*(u^{2}) = x \in
\pi_{6}(ku)$. Then $\pi_*\Sigma^{-2}\delta(x) = \lambda y$,  and as 
$c_*(y)  = 2u^2$, we obtain that
$c^* (\mathfrak{d}_{ku|ko})_*(u^{2}) = \Sigma^{-4}y$ 
and therefore
\[ (\mathfrak{d}_{ku|ko})_*(u^{2}) = u^3. \] 

Similarly $c^*(\mathfrak{d}_{ku|ko})_*(u^{4}) = 
\Sigma^{-8}2\omega$ and $(\mathfrak{d}_{ku|ko})_*(u^{4}) = u^5$ and 
in general  
\[ (\mathfrak{d}_{ku|ko})_*(u^{2i}) =  u^{2i+1}. \]

Restriction to the unit of the odd powers of $u$
gives zero. 

All the $u^i$ send $s$ to $\pm u^{i+1}$ under $c_* \circ
(\mathfrak{d}_{ku|ko})_*$, so also the odd powers of $u$ have to hit a
generator under $(\mathfrak{d}_{ku|ko})_*$, so as a map from $ku$ to
$\Sigma^{-2}ku$ the map  $\mathfrak{d}_{ku|ko}$ has cofiber $\Sigma^{-2}H\Z$. 

\end{proof}

\section{Describing ramification}
\subsection{Log-\'etaleness}

It is shown in \cite{rss} and \cite{sa} that $\ell \ra ku_{(p)}$ is
log-\'etale with respect to the log structures that are generated by
$v_1$ and by $u$. We will use the class $u \in \pi_2ku_{(2)}$
in order to define a pre-log structure for $ko_{(2)} \ra
ku_{(2)}$ 
and show that $ko_{(2)} \ra ku_{(2)}$
is not log-\'etale with respect to this pre-log structure. This indicates
that the map is not tamely ramified. We use the notation from 
\cite{sa}.

Let $\omega$ denote the Bott
element $\omega \in \pi_8ko_{(2)}$. The complexification map sends
$\omega$ to $u^4$. 

By \cite[Lemma 6.2]{sa}  we have an exact sequence
\[ \xymatrix{ &  & & \pi_1\TAQ^C(ku_{(2)})\ar`r[d]`[l]`[llld]`[dll][dll]
    \\
&     \pi_0\Bigl(ku_{(2)} \wedge
\gamma(D(u))/{\gamma(D(w))\Bigr)} \ar[r] &  \pi_0 \TAQ^{(ko_{(2)},
  D(w))}(ku_{(2)}, D(u)) \ar[r] &  \pi_0\TAQ^C(ku_{(2)}),}
\]

where $C = ko_{(2)} \wedge_{S^\J D(w)}S^\J D(u)$ and $D(u)$, $D(\omega)$ are
the pre-log structures for the elements $u$ and $\omega$ as
in \cite[Construction 4.2]{sa}. 
We have that $\gamma(D(w))$ and $\gamma (D(u))$ have the homotopy type
of the sphere and that $\gamma (D(w)) \to \gamma (D(u))$ is
multiplication by $4$. Therefore we get  
\[\pi_0\Bigl(ku_{(2)} \wedge \gamma(D(u))/{\gamma(D(w))\Bigr)} =
  \Z/4\Z.\]    
We want to show that $\pi_1\TAQ^C(ku_{(2)}) = 0 =
\pi_0\TAQ^C(ku_{(2)})$.  By \cite[Lemma 8.2]{b99}  it suffices to show
that $C \to ku_{(2)}$ is an $1$-equivalence. Since $\pi_1(ku_{(2)}) =
0$, it is enough to show that the map is an isomorphism on $\pi_0$.  
Since $S^\J D(w)$ and $S^\J D(u)$ are concentrated in nonnegative
$\J$-space degrees by \cite[Example 6.8]{rss}, they are connective.  
Thus, it is enough to show that $S^\J D(w) \to S^\J D(u)$ induces an
isomorphism on $\pi_0$.  
For this, we only have to prove that $H_0( S^\J D(w), \mathbb{Z}) \to
H_0(S^\J D(u), \mathbb{Z})$ is an isomorphism. Since this map is a
ring map we only need to know that both sides are $\mathbb{Z}$. This
follows from \cite[Proposition 5.2, Corollary 5.3]{rss18}. Hence we obtain
the following result:  
\begin{thm}
  The map 
  $(ko_{(2)}, D(\omega))   \ra (ku_{(2)}, D(u))$ is not log-\'etale. 
\end{thm}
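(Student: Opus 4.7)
The plan is that essentially all the ingredients for the proof have been assembled in the paragraph immediately preceding the theorem statement; what remains is simply to read off the conclusion from the exact sequence of \cite[Lemma 6.2]{sa}. The first thing I would record is that the left-hand term in that sequence has already been identified with $\Z/4\Z$, using that $\gamma(D(\omega))$ and $\gamma(D(u))$ both have the homotopy type of the sphere spectrum and that the comparison map between them is multiplication by $4$ (since the complexification $c \colon ko_{(2)} \to ku_{(2)}$ sends $\omega$ to $u^4$). The second input is that the flanking topological André--Quillen terms $\pi_0\TAQ^C(ku_{(2)})$ and $\pi_1\TAQ^C(ku_{(2)})$ both vanish: by the Hurewicz theorem for $\TAQ$ \cite[Lemma 8.2]{b99} this reduces to $C \to ku_{(2)}$ being a $1$-equivalence, which since $\pi_1(ku_{(2)}) = 0$ further reduces to the $\pi_0$-statement for $S^{\J} D(\omega) \to S^{\J} D(u)$ handled by \cite[Proposition 5.2, Corollary 5.3]{rss18}.

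Putting these inputs into the exact sequence
\[ \pi_1\TAQ^C(ku_{(2)}) \lra \pi_0\bigl(ku_{(2)} \wedge \gamma(D(u))/\gamma(D(\omega))\bigr) \lra \pi_0\TAQ^{(ko_{(2)}, D(\omega))}(ku_{(2)}, D(u)) \lra \pi_0\TAQ^C(ku_{(2)}) \]
collapses it to a short exact sequence
\[ 0 \lra \Z/4\Z \lra \pi_0\TAQ^{(ko_{(2)}, D(\omega))}(ku_{(2)}, D(u)) \lra 0, \]
so that $\pi_0$ of the logarithmic $\TAQ$ is isomorphic to $\Z/4\Z$ and in particular nonzero. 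Since $(ko_{(2)}, D(\omega)) \to (ku_{(2)}, D(u))$ being log-\'etale would by definition require $\TAQ^{(ko_{(2)}, D(\omega))}(ku_{(2)}, D(u)) \simeq \ast$, this non-vanishing is the obstruction and proves the theorem.

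The main obstacle in the argument is genuinely the connectivity and $\pi_0$-comparison for the $\J$-space pre-log structures $S^{\J} D(\omega) \to S^{\J} D(u)$; once this is in place, the vanishing of $\TAQ^C(ku_{(2)})$ in low degrees ensures that the $\Z/4\Z$ produced by the mismatch of pre-log structures is not cancelled by anything on either side, and the rest is formal bookkeeping in the exact sequence from \cite[Lemma 6.2]{sa}.
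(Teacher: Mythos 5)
Your proposal is correct and takes essentially the same route as the paper: identify the middle term of Sagave's four-term exact sequence with $\Z/4\Z$ (via $\gamma(D(\omega)) \to \gamma(D(u))$ being multiplication by $4$), show the flanking $\pi_0\TAQ^C(ku_{(2)})$ and $\pi_1\TAQ^C(ku_{(2)})$ vanish by the Hurewicz theorem for $\TAQ$ reduced to a $\pi_0$-comparison of the $\J$-space spectra, and conclude $\pi_0\TAQ^{(ko_{(2)},D(\omega))}(ku_{(2)},D(u)) \cong \Z/4\Z \neq 0$. The only ingredient you elide that the paper spells out is that $S^\J D(\omega)$ and $S^\J D(u)$ must be known to be connective (via \cite[Example 6.8]{rss}) before the $\pi_0$-comparison from \cite[Proposition 5.2, Corollary 5.3]{rss18} suffices, but this does not change the structure of the argument.
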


One could try to distinguish between tame and wild ramification by testing
for log-\'etaleness. In many examples, however, it is less obvious what a
suitable log structure would be.


\subsection{Wild ramification and Tate cohomology}
In the algebraic context of Galois extensions of number fields and corresponding
extension of number rings tame 
ramification yields a normal basis and a surjective trace map. Both facts are
actually also sufficient in order to distinguish tame from wild ramification.
For structured ring spectra it does
not work to impose these properties on the level of homotopy groups, because
even for finite faithful Galois extensions these would not hold. Instead we
propose a different criterion that uses the Tate construction. 

\begin{rem}
  Let $G$ be a finite group. 
Usually one calls a $G$-module $M$ \emph{cohomologically trivial}, if
$\hat{H}^i(H;M) =0$, for all $i \in \Z$ and all $H < G$. If $M$ is a
commutative ring $S$, however, it suffices to require $\hat{H}^i(G;S)
=0$ for all $i \in \Z$: In particular, $\hat{H}^0(G;S) =0$, and hence
the trace  map $tr_G \colon S \ra S^G$ is surjective. Thus 
$1_{S^G}$ is in the image of the norm, say $N_G[x] = 1_{S^G}$ for $[x] \in
S_G$. 
If $H < G$ then we consider the diagram
\[ \xymatrix{
    H^0(G;S) = S^G \ar[rr]^{i^*} & & H^0(H;S) = S^H \\
    H_0(G;S) = S_G \ar[u]^{N_G} \ar[rr]^{tr^G_H} && H_0(H;S) = S_H \ar[u]_{N_H}
 }\] 
and therefore we can express can express $1_{S^H}$ as 
\[ 1_{S^H} = i^*(1_{S^G}) = i^*N_G[x] = N_H tr^G_H[x],\]
so $1_{S^H}$ is in the image of $N_H$ and $\hat{H}^0(H;S) =0$. But
$\hat{H}^*(H;S)$ is a graded commutative ring with unit $[1_{S^H}]=0$,
and thus $\hat{H}^*(H;S) = 0$.

The same argument shows that the surjectivity of the trace map suffices
for being cohomologically trivial. 

\end{rem}

Even if $A \ra B$ is a $G$-Galois extension of ring spectra, it is not true,
that this
implies that $B$ is faithful as an $A$-module. An example, due to
Wieland is the $C_2$-Galois extension $F((BC_2)_+, H\F_2) \ra
F((EC_2)_+, H\F_2) \simeq H\F_2$ which is \emph{not} faithful: The
$F((BC_2)_+, H\F_2)$-module spectrum $(H\mathbb{F}_2)^{tC_2}$ is not trivial,
but $H\F_2 \wedge_{F((BC_2)_+, H\F_2)} (H\mathbb{F}_2)^{tC_2} \sim *$.

Note that for a map $A \ra B$ between connective commutative ring spectra with
a finite group $G$ acting on $B$ via commutative $A$-algebra maps
it makes sense to replace the usual homotopy fixed point condition by
the condition that $A$ is weakly
equivalent to $\tau_{\geq 0}B^{hG}$. In many
examples $B^{hG}$ won't be connective. The map $A \ra B$ factors
through $A \ra B^{hG} \ra B$, but as $A$ is connective, we can
consider the induced map on connective covers and obtain a map of
commutative ring spectra 
\[ \tau_{\geq 0}A = A \ra \tau_{\geq 0}B^{hG} \ra \tau_{\geq 0}B =
  B, \]
that turns $\tau_{\geq 0}B^{hG}$ into a commutative $A$-algebra spectrum.

For any spectrum $X$ we denote by $\tau_{<0}X$ the cofiber of the map 
$\tau_{\geq 0} X \ra X$. 

The following result is probably well-known. 
\begin{lem} \label{lem:coftate}
Let $G$ be a finite group and let $e$ be a naive connective $G$-spectrum. Then 
\[ \tau_{\geq 0} e^{hG} \ra  e^{hG} \ra \tau_{<0}e^{tG} \]
is a cofiber sequence and  $\tau_{< 0}e^{tG} \simeq \tau_{< 0}e^{hG}$. 
\end{lem}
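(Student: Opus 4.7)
The plan is to derive both assertions from a comparison of the Postnikov cofiber sequence
\[ \tau_{\geq 0} e^{hG} \to e^{hG} \to \tau_{<0} e^{hG} \]
with the norm cofiber sequence
\[ e_{hG} \to e^{hG} \to e^{tG} \]
of the Tate construction. The bridge between them will be that the connectivity of $e$ forces $e_{hG}$ to be connective, and this in turn lets one identify the negative homotopy groups of $e^{hG}$ and of $e^{tG}$.

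First I would verify that $e_{hG}$ is connective. This follows because, realised via the bar construction in naive $G$-spectra, $e_{hG}$ is a geometric realization whose simplicial pieces are wedges of copies of $e$; the connectivity of $e$ therefore propagates to $e_{hG}$. Consequently $\pi_i(e_{hG}) = 0$ for every $i < 0$. Feeding this into the long exact sequence associated with the norm cofiber sequence, the canonical map $e^{hG} \to e^{tG}$ is an isomorphism on $\pi_i$ for every $i < 0$. By functoriality of Postnikov truncation this produces an induced map $\tau_{<0} e^{hG} \to \tau_{<0} e^{tG}$, which is then a $\pi_*$-isomorphism between spectra supported only in negative degrees, hence an equivalence. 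This is the second assertion.

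For the first assertion I would transport the Postnikov cofiber sequence along this equivalence: the composite
\[ e^{hG} \to \tau_{<0} e^{hG} \xrightarrow{\;\simeq\;} \tau_{<0} e^{tG} \]
yields a cofiber sequence with fibre $\tau_{\geq 0} e^{hG}$. Naturality of the Postnikov truncation applied to the map $e^{hG} \to e^{tG}$ ensures that this composite agrees, up to homotopy, with $e^{hG} \to e^{tG} \to \tau_{<0} e^{tG}$, which is the map intended in the statement.

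I do not foresee any substantive obstacle. The only point worth checking carefully is the connectivity of $e_{hG}$ at the level of a chosen point-set model of naive $G$-spectra; once that is done, the remainder of the argument is formal consequence of the long exact sequence and of the naturality of $\tau_{<0}$.
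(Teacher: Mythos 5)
Your argument is correct and follows essentially the same route as the paper: both proofs hinge on the connectivity of $e_{hG}$, which forces the norm map to be a $\pi_*$-isomorphism in negative degrees, and then extract both claims by playing the norm cofiber sequence off against the Postnikov one. The paper phrases this as a $3\times 3$ diagram combining the norm cofiber sequence with the defining cofiber sequence of $\tau_{<0}$, while you run the same information through the long exact sequence and naturality of $\tau_{<0}$; this is a presentational difference only.
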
  
\begin{proof}
  We consider the norm sequence
  \[ \xymatrix@1{e_{hG} \ar[r]^N & e^{hG} \ar[r] & e^{tG}}. \]
  As $e_{hG}$ is a connective spectrum, we have that $\pi_{-1}e_{hG} =
  0$. Hence, applying $\tau_{\geq 0}$ still gives rise to
  a cofiber sequence
 \[ \xymatrix@1{\tau_{\geq 0}e_{hG}=e_{hG} \ar[r]^(0.6){\tau_{\geq
         0}N} & \tau_{\geq 0}e^{hG} 
     \ar[r] & \tau_{\geq 0} e^{tG}}. \]
We combine the norm cofiber sequences with the defining cofiber sequence of
$\tau_{<0}$ and obtain
\[ \xymatrix{ \tau_{\geq 0}e_{hG} \ar@{=}[r] \ar[d]_{\tau_{\geq 0}N}&
    e_{hG} \ar[d]_{N} \ar[r] & \ast \ar[d]\\
    \tau_{\geq 0} e^{hG} \ar[r] \ar[d] & e^{hG} \ar[r] \ar[d] &
    \tau_{<0} e^{hG} \ar[d] \\ 
 \tau_{\geq 0} e^{tG} \ar[r] & e^{tG} \ar[r] & {\tau_{<0} e^{tG}.} 
  } \]
Thus $\tau_{<0} e^{hG} \simeq \tau_{<0} e^{tG}$ and the cofiber
sequence in the second row then yields the claim.  
\end{proof}

\begin{rem}
In many cases, if $B^{tG} \not\simeq *$, then $\pi_*(B^{tG})$ is actually 
periodic. As the canonical K\"unneth map
\[ \pi_*(B^{tG}) \otimes_{\pi_*(B^{hG})} \pi_*(B) \ra \pi_*(B^{tG}
  \wedge_{B^{hG}} B) \]
is  a map of graded commutative rings and as
$\pi_*(B^{tG}) \cong \pi_*(B^{hG})$ in negative degrees, a periodicity
generator in  a negative degree would map to zero in $\pi_*B$ for connective
$B$ and hence  $\pi_*(B^{tG}) \otimes_{\pi_*(B^{hG})} \pi_*(B)$ is the zero ring.
But then also $\pi_*(B^{tG} \wedge_{B^{hG}} B) \cong 0$ and
\[ B^{tG} \wedge_{B^{hG}} B \simeq *. \]
Therefore $B$ would not be a faithful $B^{hG}$-module in these cases. This
emphasizes the importance of replacing the condition that $A$ be weakly
equivalent to $B^{hG}$ by the requirement that $A \simeq \tau_{\geq 0}(B^{hG})$.

From Lemma \ref{lem:coftate} we also know that in order to show that
$B^{tG} \not\simeq *$ for connective $B$ it is 
sufficient to show that $\tau_{< 0}B^{hG}$ is not trivial.
\end{rem}

In \cite[Proposition 6.3.3]{rognes} Rognes assumes that $A \simeq B^{hG}$, but
that assumption is actually not needed for the following:

\begin{thm}
Assume that $G$ is a finite group, $B$ is a cofibrant commutative $A$-algebra
on which $G$ acts via
maps of commutative $A$-algebras. If $B$ is
dualizable and faithful as an $A$-module and if 
\[ h \colon \xymatrix@1{B \wedge_A B \ar[r]^\sim & F(G_+, B)},  \]
then $B^{tG} \simeq *$.  
\end{thm}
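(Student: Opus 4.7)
The plan is to deduce $B^{tG} \simeq *$ from $B \wedge_A B^{tG} \simeq *$ by invoking the faithfulness of $B$ as an $A$-module. Since $B$ is dualizable over $A$ with dual $B^\vee$, the functor $B \wedge_A (-)$ is both a left adjoint, so it preserves homotopy colimits and in particular homotopy orbits, and equivalent to $F_A(B^\vee, -)$, so it preserves homotopy limits and in particular homotopy fixed points. Hence $B \wedge_A (-)$ commutes with the norm map $e_{hG} \ra e^{hG}$ for any naive $G$-spectrum $e$, and therefore with the Tate construction. Applied to $B$ itself, with $G$ acting on the right smash factor of $B \wedge_A B$, this gives
\[ B \wedge_A B^{tG} \simeq (B \wedge_A B)^{tG}. \]

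Next I would unpack the hypothesis that $h \colon B \wedge_A B \ra F(G_+, B)$ is a weak equivalence and check that $h$ is $G$-equivariant when $B \wedge_A B$ carries the $G$-action on the right factor and $F(G_+, B)$ carries the translation action on $G_+$. This is a direct unwinding of the definition of $h$ as the adjoint of the map $(b_1 \wedge b_2) \wedge g \mapsto b_1 \cdot g(b_2)$, yielding $h(b_1 \wedge g' b_2)(g) = h(b_1 \wedge b_2)(g g')$. With this identification $F(G_+, B)$ is a free $G$-spectrum, equivalent to $G_+ \wedge B$ with $G$ acting through left translation on $G_+$, so its Tate construction is contractible. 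Combining,
\[ (B \wedge_A B)^{tG} \simeq F(G_+, B)^{tG} \simeq *, \]
whence $B \wedge_A B^{tG} \simeq *$, and the faithfulness of $B$ over $A$ forces $B^{tG} \simeq *$.

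The main obstacle I expect lies in the interchange of $B \wedge_A (-)$ with the Tate construction. It depends on dualizability delivering both adjoint properties simultaneously, and on the norm transfer being built functorially enough in the $A$-module category that applying $B \wedge_A (-)$ to the norm cofiber sequence for $B$ produces the norm cofiber sequence for $B \wedge_A B$ with the $G$-action on the second factor. The $G$-equivariance of $h$ is only a formal check, but care is needed since the definition of $h$ intertwines the $G$-action on $B$ with the multiplication on $B$, and one must verify that the resulting action on $F(G_+, B)$ is genuinely the free translation action rather than a twisted one.
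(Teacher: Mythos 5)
Your proposal is correct and follows essentially the same route as the paper: use dualizability so that $B \wedge_A (-)$ commutes with both homotopy orbits and homotopy fixed points (hence with the norm cofiber sequence and the Tate construction), then invoke the $G$-equivariance of $h$ together with $F(G_+,B)^{tG}\simeq *$ to kill $B\wedge_A(B^{tG})$, and finish with faithfulness. The paper organizes the argument as an explicit three-by-three diagram of norm cofiber sequences rather than asserting directly that $B\wedge_A(-)$ commutes with the Tate construction, but this is exactly the bookkeeping you flag as the potential obstacle, and it works as you expect.
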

The proof is given in \cite{rognes}. We repeat it in order to convince the
reader that one does not need to assume that $A \simeq B^{hG}$.
\begin{proof}
We consider the following commutative diagram in which the columns arise from
the norm cofiber sequence. 
\[
\xymatrix{
B \wedge_A (B_{hG}) \ar[r] \ar[d]_{B \wedge_A N} & (B \wedge_A B)_{hG}
\ar[r]^{h_{hG}} \ar[d]^N &
F(G_+, B)_{hG} \ar[d]^N\\
B \wedge_A (B^{hG}) \ar[r] \ar[d] & (B \wedge_A B)^{hG} \ar[r]^{h^{hG}}
\ar[d] & 
F(G_+, B)^{hG} \ar[d]\\
B \wedge_A (B^{tG}) \ar[r] & (B \wedge_A B)^{tG} \ar[r]^{h^{tG}} & 
F(G_+, B)^{tG}
}
\]

As $F(G_+, B)$ is free over $G$, we have $F(G_+, B)^{tG} \simeq *$ and the 
norm map is an equivalence between $F(G_+, B)_{hG}$ and $F(G_+,B)^{hG}$. 
The map  $h$ is equivariant and hence it induces weak equivalences on 
homotopy orbits, homotopy fixed 
points and the Tate construction. This shows that 
\[ N \colon (B \wedge_A B)_{hG} \ra (B \wedge_A B)^{hG}\]
is a weak equivalence. 

The left horizontal map is a weak equivalence because $B$ is dualizable as 
an $A$-module. The map $B \wedge_A (B_{hG}) \ra (B \wedge_A 
B)_{hG}$ is always a weak equivalence. Therefore the map 
\[ B \wedge_A N \colon B \wedge_A (B_{hG}) \ra B \wedge_A (B^{hG})\]
is a weak equivalence and thus $B \wedge_A (B^{tG}) \simeq *$. As $B$ is a 
faithful $A$-module, this implies that $B^{tG} \simeq *$. 
\end{proof}

\begin{rem}
Therefore, if $G$ is a finite group, $B$ is a cofibrant commutative $A$-algebra
on which $G$ acts via maps of commutative $A$-algebras. If $B$ is
dualizable and faithful as an $A$-module and if 
$B^{tG} \not\simeq *$, then we know that $h \colon B \wedge_A B \ra F(G_+,B)$
cannot be a weak equivalence, and hence $A \ra B$ is ramified. 

\end{rem}

We propose a definition of tame and wild ramification for commutative
ring spectra and justify our definition by investigating several examples.

In the following we denote by $\tau_{\geq 0} X$ the connective cover of a
spectrum $X$. 

\begin{defn}
Assume that $A \ra B$ is a map of commutative ring spectra such that
$G$ acts on $B$ via commutative $A$-algebra maps and $B$ is faithful and
dualizable as an $A$-module. 

If $A \simeq B^{hG}$ (or $A \simeq \tau_{\geq 0}B^{hG}$ if $A$ and
  $B$ are  connective), then we call $A \ra B$ 
  \emph{tamely ramified} if $B^{tG} \simeq *$. Otherwise, $A
  \ra B$ is \emph{wildly ramified}.
\end{defn}

\begin{rem}
To compute the homotopy of $B^{tG}$ we use the Tate spectral sequence 
\[ E^2_{n,m} = \hat{H}^{-n}(G; \pi_m(B)) \Longrightarrow \pi_{n+m}B^{tG}\] 
which is of standard homological type, multiplicative and conditionally convergent. In particular by \cite[Theorem 8.2]{boa}, it converges strongly if it collapses at a finite stage. 
\end{rem}

We will now investigate our criterion for wild ramification in examples.
First, we establish faithfulness: 

\begin{lem} \label{lem:tmf2ff}
The map $\tmf_0(2)_{(3)} \ra \tmf(2)_{(3)}$ identifies $\tmf(2)_{(3)}$
as a faithful $\tmf_0(2)_{(3)}$-module. 
\end{lem}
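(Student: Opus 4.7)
The plan is to exhibit an explicit $\tmf_0(2)_{(3)}$-module splitting of $\tmf(2)_{(3)}$ that exhibits it as a free (and in particular faithful) $\tmf_0(2)_{(3)}$-module.

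Recall from the proof of the preceding theorem that
\[ \Z_{(3)}[\lambda_1,\lambda_2] \cong \Z_{(3)}[\lambda_1+\lambda_2,\lambda_1\lambda_2] \oplus \Z_{(3)}[\lambda_1+\lambda_2,\lambda_1\lambda_2]\lambda_1, \]
so $\pi_*\tmf(2)_{(3)}$ is a free $\pi_*\tmf_0(2)_{(3)}$-module with basis $\{1,\lambda_1\}$. The strategy is to realize this splitting at the level of $\tmf_0(2)_{(3)}$-modules.

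The unit $\tmf_0(2)_{(3)} \ra \tmf(2)_{(3)}$ provides a $\tmf_0(2)_{(3)}$-module map, and the class $\lambda_1 \in \pi_4\tmf(2)_{(3)}$ is represented by a $\tmf_0(2)_{(3)}$-module map $\Sigma^4 \tmf_0(2)_{(3)} \ra \tmf(2)_{(3)}$. Adding these yields a map of $\tmf_0(2)_{(3)}$-modules
\[ \varphi \colon \tmf_0(2)_{(3)} \vee \Sigma^4 \tmf_0(2)_{(3)} \lra \tmf(2)_{(3)}, \]
which is an isomorphism on homotopy groups by the above computation, hence a weak equivalence of $\tmf_0(2)_{(3)}$-modules.

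With this decomposition in hand, faithfulness is immediate: if $M$ is any $\tmf_0(2)_{(3)}$-module with $\tmf(2)_{(3)} \wedge_{\tmf_0(2)_{(3)}} M \simeq *$, then smashing the equivalence $\varphi$ with $M$ over $\tmf_0(2)_{(3)}$ gives
\[ M \vee \Sigma^4 M \simeq \bigl( \tmf_0(2)_{(3)} \vee \Sigma^4 \tmf_0(2)_{(3)} \bigr) \wedge_{\tmf_0(2)_{(3)}} M \simeq \tmf(2)_{(3)} \wedge_{\tmf_0(2)_{(3)}} M \simeq *, \]
which forces $M \simeq *$. There is really no obstacle here — the only point to verify carefully is the lift of $\lambda_1$ to a $\tmf_0(2)_{(3)}$-module map, but this is nothing more than the standard fact that homotopy classes of module maps $\Sigma^n R \to N$ out of a free module correspond to elements of $\pi_n N$.
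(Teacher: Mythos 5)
Your proof is correct, but it takes a genuinely different route from the paper's. The paper uses the Galois-theoretic structure: since $C_2$ acts on $\tmf(2)_{(3)}$ via commutative $\tmf_0(2)_{(3)}$-algebra maps and $\tmf_0(2)_{(3)} \simeq \tau_{\geq 0}\tmf(2)_{(3)}^{hC_2}$, the trace map $tr \colon \tmf(2)_{(3)} \ra \tmf_0(2)_{(3)}$ precomposed with the unit is homotopic to multiplication by $|C_2|=2$ (arguing as in \cite[Lemma 6.4.3]{rognes}), and $2$ being a unit in $\pi_0\tmf_0(2)_{(3)}$ makes the trace a split surjection of $\tmf_0(2)_{(3)}$-modules, whence faithfulness. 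You instead exploit the freeness of $\pi_*\tmf(2)_{(3)}$ as a $\pi_*\tmf_0(2)_{(3)}$-module on the basis $\{1,\lambda_1\}$ and lift the basis elements to module maps, producing an explicit equivalence $\tmf_0(2)_{(3)} \vee \Sigma^4\tmf_0(2)_{(3)} \xr{\sim} \tmf(2)_{(3)}$. Your argument is more elementary in that it bypasses the homotopy-fixed-point identification and Rognes' trace lemma, and it actually yields the stronger conclusion that $\tmf(2)_{(3)}$ is a \emph{free} $\tmf_0(2)_{(3)}$-module of rank two — which gives dualizability at the same time, something the paper establishes separately via Meier's perfectness result in Remark \ref{rem:ffdual}. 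The paper's trace argument, on the other hand, is more robust: it does not require a free homotopy module and would apply verbatim whenever $|G|$ is invertible in $\pi_0$, even if $\pi_*B$ were not free over $\pi_*A$.
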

\begin{proof}
For the map  $\tmf_0(2)_{(3)} \ra \tmf(2)_{(3)}$ we know that $C_2$
acts on $\tmf(2)_{(3)}$ via commutative $\tmf_0(2)_{(3)}$-algebra maps
and that $\tmf_0(2)_{(3)} \simeq \tau_{\geq
  0}(\tmf(2)_{(3)}^{hC_2})$. The trace map $tr \colon \tmf(2)_{(3)}
\ra \tmf(2)_{(3)}^{hC_2}$ factor through $\tau_{\geq
  0}(\tmf(2)_{(3)}^{hC_2}) \simeq \tmf_0(2)_{(3)}$, because
$\tmf(2)_{(3)}$ is connective. As in \cite[Lemma 6.4.3]{rognes} one
can show that the composite  
\[ \xymatrix@1{\tmf_0(2)_{(3)} \simeq \tau_{\geq
      0}(\tmf(2)_{(3)}^{hC_2}) 
\ar[r] &  \tmf(2)_{(3)} \ar[r]^(0.3){tr} & \tau_{\geq
  0}(\tmf(2)_{(3)}^{hC_2})  \simeq  \tmf_0(2)_{(3)} }\]
is homotopic to the map that is the multiplication by $|C_2|=2$. As
$2$ is invertible in $\pi_0\tmf_0(2)_{(3)}$, the trace map $tr \colon
\tmf(2)_{(3)} \ra  \tmf_0(2)_{(3)}$ is a split surjective map of
$\tmf_0(2)_{(3)}$-modules and hence $\tmf_0(2)_{(3)} \ra
\tmf(2)_{(3)}$ is faithful.  
\end{proof}  
  
\begin{lem} \label{lem:tmf02ff} 
The spectrum $\tmf_0(2)_{(3)}$ is faithful as a $\tmf_{(3)}$-module
spectrum. 
\end{lem}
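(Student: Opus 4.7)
The plan is to reduce the faithfulness of $\tmf_0(2)_{(3)}$ as a $\tmf_{(3)}$-module to the faithfulness of the finite spectrum $T$ as an $S_{(3)}$-module, and then invoke the Hopkins--Smith thick subcategory theorem. I will use the equivalence of $\tmf_{(3)}$-modules $\tmf_{(3)} \wedge T \simeq \tmf_0(2)_{(3)}$ from the proof of Lemma \ref{relsmash1}: for any $\tmf_{(3)}$-module $M$ this gives
\[ M \wedge_{\tmf_{(3)}} \tmf_0(2)_{(3)} \simeq M \wedge_{\tmf_{(3)}} (\tmf_{(3)} \wedge T) \simeq M \wedge T, \]
so it suffices to establish that $M \wedge T \simeq *$ forces $M \simeq *$.

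Next, I would observe that $T$ is a finite $3$-local spectrum of type $0$. Since $\alpha_1 \in \pi_3(S_{(3)})$ has order $3$, its rationalization is null, and the cell structure $T = S^0_{(3)} \cup_{\alpha_1} e^4 \cup_{\alpha_1} e^8$ rationally splits as a wedge $S_{\Q} \vee \Sigma^4 S_{\Q} \vee \Sigma^8 S_{\Q}$, which is non-zero.

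Once type $0$ is in hand, the Hopkins--Smith thick subcategory theorem identifies the thick subcategory of finite $3$-local spectra generated by $T$ with the full subcategory of all type-$0$ finite $3$-local spectra; in particular this thick subcategory contains $S_{(3)}$. Since smashing with the fixed spectrum $M$ is an exact functor that preserves retracts, the class of finite $3$-local spectra $X$ with $M \wedge X \simeq *$ is itself a thick subcategory, so from $M \wedge T \simeq *$ we obtain $M \wedge S_{(3)} \simeq *$, and thus $M \simeq *$ using that $M$ is $3$-local as a $\tmf_{(3)}$-module. The main potential point of difficulty is verifying that $T$ is of type $0$, but this is immediate from $3\alpha_1 = 0$; once this is noted, the thick subcategory theorem closes the argument with no further computation.
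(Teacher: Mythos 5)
Your argument is correct, and it takes a genuinely different route from the paper's. You both begin the same way, reducing faithfulness to the implication $M\wedge T\simeq * \Rightarrow M\simeq *$ via the identification $\tmf_0(2)_{(3)}\simeq \tmf_{(3)}\wedge T$. The paper then proceeds hands-on: from the two-step cell filtration of $T$ it extracts an equivalence $\Sigma^{10}M\simeq M$, observes that this self-equivalence is induced by smashing $M$ with a class in $\pi_{10}S_{(3)}\cong\Z/3\Z\{\beta_1\}$, and concludes $M\simeq *$ from the nilpotence of that class (a fact available already from Nishida's theorem). You instead appeal to the Hopkins--Smith thick subcategory theorem: $T$ has nontrivial rational homology (since $3\alpha_1=0$), hence is a type-$0$ finite $3$-local spectrum and generates the thick subcategory of \emph{all} finite $3$-local spectra, in particular $S_{(3)}$; since $\{X\text{ finite }3\text{-local}:M\wedge X\simeq *\}$ is thick and contains $T$, it contains $S_{(3)}$, and $M$ being $3$-local gives $M\simeq M\wedge S_{(3)}\simeq *$. (One small wording point: the thick subcategory generated by a type-$0$ spectrum is the category of all finite $3$-local spectra, not merely those of type exactly $0$ -- this is harmless for your argument since $S_{(3)}$ itself has type $0$.) Your approach buys generality and brevity: it applies verbatim to any type-$0$ finite complex without identifying a periodicity class. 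The paper's approach buys elementariness: it needs only the nilpotence of the single element $\beta_1$, not the full Devinatz--Hopkins--Smith machinery underlying the thick subcategory theorem.
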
  
\begin{proof}
  We already mentioned Behrens' identification \cite[Lemma 2]{behrens}
  $\tmf_0(2)_{(3)}
\simeq \tmf_{(3)} \wedge T$  where $T = S^0 \cup_{\alpha_1} e^4
\cup_{\alpha_1} e^8$ with $\alpha_1 \in (\pi_3S)_{(3)}$. Note that $\alpha_1$ is
nilpotent of order $2$ because $(\pi_6S)_{(3)} = 0$. 

Assume that $M$ is a $\tmf_{(3)}$-module with 
\[ * \simeq M \wedge_{\tmf_{(3)}} \tmf_0(2)_{(3)} \simeq M \wedge
  T. \] 

Then the cofiber sequences 
\[ \xymatrix@1{S^0 \ar[r] & T \ar[r] &  \Sigma^4\cone(\alpha_1)}
  \text{ and }  \xymatrix@1{\cone(\alpha_1) \ar[r] & T \ar[r] & S^8}
\]
imply that $\Sigma^4\cone(\alpha_1) \wedge M  \simeq \Sigma M$ and
$\Sigma^8 M \simeq \Sigma \cone(\alpha_1) \wedge M$ and therefore
\[ \Sigma^{10} M \simeq M. \]

The equivalence is induced by a class  in $\pi_{10}S_{(3)} \cong \mathbb{Z}/{3\mathbb{Z}}\{\beta_1\}$. As this is  nilpotent, we get  that $M \simeq *$.

\end{proof}

\begin{rem}  \label{rem:ffdual}
It is known  that $ko \ra ku$ is faithful \cite[Proposition
5.3.1]{rognes} and dualizable  and it is clear that $\ell \ra ku_{(p)}$ is
faithful and dualizable as
the inclusion of a summand. As $\tmf_1(3)_{(2)}$ can be
identified with $\tmf_{(2)} \wedge DA(1)$ as a $\tmf_{(2)}$-module
\cite[Theorem 4.12]{mathewh},
where $DA(1)$ is a finite cell complex realizing the double of $A(1) = \langle
Sq^1,Sq^2\rangle$, it is dualizable. An argument as in
\cite[Proof of Proposition 5.4.5]{rognes} shows that $\tmf_{(2)} \ra
\tmf_1(3)_{(2)}$ is faithful.

At the moment we don't know whether $\tmf_0(3)_{(2)} \ra
\tmf_1(3)_{(2)}$ is faithful. The diagram
\[ \xymatrix{
\tmf_0(3)_{(2)} \ar[rr] \ar[dr]_{2 \cdot} & & \tmf_1(3)_{(2)}
\ar[ld]^{tr} \\
&\tmf_0(3)_{(2)} & 
  }\] 
commutes, so if $M$ is a $\tmf_0(3)_{(2)}$-module spectrum with $M
\wedge_{\tmf_0(3)_{(2)}} \tmf_1(3)_{(2)} \simeq *$, then
multiplication by $2$ is a trivial self-map on $M$.

Meier shows \cite{meier} 
that $\tmf_1(3)$ is not perfect as a $\tmf_0(3)$-module, hence 
$\tmf_1(3)$ is not a dualizable $\tmf_0(3)$-module.

Meier also proves that $\tmf[\frac{1}{n}] \ra \tmf(n)$ is dualizable for all
$n$. By combining his result with Lemma \ref{lem:tmf2ff} and Lemma
\ref{lem:tmf02ff} we obtain that $\tmf(2)_{(3)}$ is dualizable and
faithful as a $\tmf_{(3)}$-module.

\end{rem}

We show that the extensions $\tmf_0(3)_{(2)} \ra \tmf_1(3)_{(2)}$ and 
$\tmf_{(3)} \ra 
\tmf(2)_{(3)}$ have non-trivial Tate spectra. For $ku$ the Tate spectrum
with respect to the complex conjugation $C_2$-action  satisfies
\[ ku^{tC_2} \simeq \bigvee_{i \in \Z} \Sigma^{4i} H\Z/2\Z. \]
This result is due to Rognes (compare \cite[\S 5.3]{rognes}). 

\begin{thm}
  For $\tmf_1(3)_{(2)}$ with its $C_2$-action we obtain an equivalence of
  spectra 
\[ \tmf_1(3)_{(2)}^{tC_2} \simeq \bigvee_{i \in \Z} \Sigma^{8i} H\Z/2\Z. \] 
\end{thm}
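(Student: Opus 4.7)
The plan is to compute $\pi_*(\tmf_1(3)_{(2)}^{tC_2})$ via the Tate spectral sequence
\[ E_2^{s,t} = \hat H^s(C_2;\pi_t \tmf_1(3)_{(2)}) \Longrightarrow \pi_{t-s}(\tmf_1(3)_{(2)}^{tC_2}). \]
Since $\pi_* \tmf_1(3)_{(2)} \cong \Z_{(2)}[a_1, a_3]$ with $C_2$ acting by $a_i \mapsto -a_i$, each monomial $a_1^m a_3^n$ lies in the trivial or in the sign summand according to the parity of $m+n$. A standard computation of Tate cohomology then yields
\[ E_2 \cong \F_2[a_1,a_3,t^{\pm 1}] \]
as a bigraded ring, with $(\text{cohomological},\text{internal})$-bidegrees $|a_1|=(1,2)$, $|a_3|=(1,6)$, $|t|=(2,0)$.

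Next, I would pin down the initial differential using naturality. The map $\tmf_1(3)_{(2)} \to ku_{(2)}$ arising as the cofiber of multiplication by $a_3$ (made $C_2$-equivariant by twisting its source with the representation $3+3\sigma$) induces a morphism of Tate spectral sequences. The previous theorem and its proof strategy for $ku_{(2)}^{tC_2}$ give $d_3(a_1)=0$ and $d_3(t^{-1})=a_1$, whence $d_3(t^k)=k a_1 t^{k+1}$ by Leibniz. Bidegree constraints place $d_3(a_3)$ in $\F_2\{a_1^4, a_1 a_3 t\}$, and a multiplicative argument—using that the only possible surviving periodicity generator in positive total degree must be represented by the invariant class $a_1 a_3$ modulo $t$—forces $d_3(a_3) = a_1 a_3 t$.

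With this, the $E_4$-page collapses to $\F_2[\alpha,\gamma^{\pm 1}]$ with $\alpha := a_3 t$ of total degree $3$ and $\gamma := t^{-2}$ of total degree $4$. Bidegree reasons single out $d_7$ as the next possibly nontrivial differential, and a further naturality-and-Leibniz argument yields $d_7(\gamma) = \alpha$ and $d_7(\alpha) = 0$. This kills every $\alpha^i \gamma^j$ with $i \geq 1$, leaving $E_8 \cong \F_2[\gamma^{\pm 2}]$, which has one copy of $\F_2$ in every total degree $8i$ and nothing else. Bidegree constraints then preclude any subsequent differentials, so $E_\infty = E_8$.

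Finally, since every $E_\infty$-class is an $\F_2$-vector space, there are no additive or multiplicative extension problems, and $\pi_* \tmf_1(3)_{(2)}^{tC_2} \cong \F_2[v^{\pm 1}]$ with $|v|=8$. A standard identification of generalized Eilenberg--MacLane spectra then produces the desired equivalence with $\bigvee_{i\in\Z}\Sigma^{8i}H\Z/2\Z$. The main obstacle is the determination of the differentials involving $a_3$ (in particular $d_3(a_3)$ and $d_7(\gamma)$), since $a_3$ vanishes in the target of the $ku_{(2)}$-comparison; pinning these down requires internal information about the $C_2$-equivariant structure of $\tmf_1(3)_{(2)}$, for instance via an explicit identification of the relevant $k$-invariants.
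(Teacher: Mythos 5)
Your overall strategy mirrors the paper's: compute the $E_2$-page of the Tate spectral sequence, run the differentials, and then invoke a Hopkins--Mahowald-type recognition theorem (via \cite{mnn}) to identify the $E_\infty$-ring $\tmf_1(3)_{(2)}^{tC_2}$ as a generalized Eilenberg--MacLane spectrum once $\pi_*$ is shown to be $\F_2[v^{\pm 1}]$ concentrated in a single filtration. Your $E_2$-page is correct (it is the even-weight part of $\F_2[a_1,a_3,\zeta^{\pm 1}]$ under your change of generators), and the pattern $d_3,d_7,\text{collapse at }E_8$ is exactly what the paper finds.

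However, there is a genuine gap, and you yourself flag it as "the main obstacle." The comparison with $ku_{(2)}$ pins down $d_3(a_1)=0$ and $d_3(t^{-1})=a_1$, but it says nothing about $d_3(a_3)$ or $d_7(\gamma)$, since $a_3$ is killed by $\tmf_1(3)_{(2)}\to ku_{(2)}$. The "multiplicative argument" you offer — that the surviving periodicity generator must be represented by $a_1a_3$ modulo $t$, hence $d_3(a_3)=a_1a_3t$ — is circular: you are using knowledge of the answer to determine the differential that produces the answer. Moreover, a priori $d_3(a_3)$ could also be $0$ or $a_1^4 + a_1a_3t$; restricting to the two nonzero monomials is not justified by bidegree considerations alone. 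The paper fills this gap by a different and decisive comparison: it maps the Tate spectral sequence for the connective $\tmf_1(3)_{(2)}$ into the Tate spectral sequence for the \emph{periodic} $\TMF_1(3)_{(2)}$, whose differentials can be read off from Mahowald--Rezk's computation of the homotopy fixed point spectral sequence for $\TMF_0(3)$. The paper checks that this map is injective on the $E^2$- and $E^4$-pages, which imports the $d^3$ and $d^7$ differentials rigorously. Without an input of this kind (Mahowald--Rezk's calculation, or an explicit identification of the relevant $k$-invariants as you suggest), your argument does not close.

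A smaller point: your claim that there are no extension problems "since every $E_\infty$-class is an $\F_2$-vector space" is not a valid reason by itself — $\F_2$'s in adjacent filtrations could still assemble into a $\Z/4$. What actually rules out extensions here, as the paper observes, is that $E^8 = E^\infty$ is concentrated in filtration zero.
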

\begin{proof}
 We use the calculations in \cite{mr09}. They compute the homotopy fixed point spectral sequence
 \[ E^2_{n,m} = H^{-n}(C_2; \pi_m\TMF_1(3)_{(2)})
   \Longrightarrow \pi_{n+m}\TMF_0(3)_{(2)}, \] 
 where $\pi_*\TMF_1(3)_{(2)} = \mathbb{Z}_{(2)}[a_1, a_3][\Delta^{-1}]$ 
 with $\Delta = a_3^3(a_1^3-27a_3)$. 
 From their computations we  deduce the following behaviour of the Tate spectral sequence 
\begin{equation} \label{tate1}
  E^2_{n,m} = \hat{H}^{-n}(C_2; \pi_m\TMF_1(3)_{(2)})
  \Longrightarrow \pi_{n+m}\TMF_1(3)_{(2)}^{tC_2}: 
 \end{equation}
Let $R_{n,m}$ be the bigraded ring $\mathbb{Z}/2[a_1, a_3][\Delta^{-1}][\zeta^{\pm}]$ with $|\zeta| = (-1, 0)$. If we assign odd weight to $a_1$, $a_3$ and $\zeta$,   then the $E^2$-page of the Tate spectral sequence is the even part of $R_{n,m}$. 
Alternatively, it is given by 
\[ E^2_{*,*} = S_*[\Delta^{-1}][x^{\pm}], \] 
where
$S_*$ is the subalgebra of $\mathbb{Z}/{2\mathbb{Z}}[a_1,a_3]$ generated by $a_1^2, a_1a_3, a_3^2$, 
 and where $x = \zeta a_3^3 \in E_{-1,18}$. 
 Note that $a_3^2$ is invertible in this ring with $a_3^{-2} = ((a_1a_3)a_1^2-27a_3^2)\Delta^{-1}$.   By Mahowald-Rezk's computations the first non-trivial differential is $d^3$ and we have 
 \begin{align*}
   d^3(a_1^2) = & (x (a_1a_3) a_3^{-4})^3, &  d^3(a_1a_3) = 0,
   &  \qquad  d^3(a_3^2)  = x^3 (a_1a_3) a_3^{-8},  \\
   d^3(x) = & 0, & d^3(\Delta^{-1}) = 0. &    
 \end{align*} 
 Using the Leibniz rule we get that the class $c_{n,m,k,l,i} = (a_1^2)^n(a_1a_3)^m(a_3^2)^k\Delta^{-l}x^i$ with $n,m,k,l \in \mathbb{N}$ and $i \in \mathbb{Z}$ has differential 
\[ d^3(c_{n,m,k,l,i}) = (n+k) x^3 (a_1a_3) a_3^{-10} c_{n,m,k,l,i} . \] 
It follows that $\ker d^3$ is generated as $\mathbb{F}_2$-vector space by the classes $c_{n,m,k,l,i}$ with $n+k = 0$ in $\mathbb{F}_2$.   We claim that 
\[ E^4_{*,*} \cong \mathbb{F}_2[x^{\pm}, \Delta^{\pm}].  \] 
To see this, note the following: 
If $n+k = 0$ in $\mathbb{F}_2$  and $m > 0$, then $c_{n,m,k,l,i}$ is 
zero in $E^4_{*,*}$ because 
\[ d^3(c_{n,m-1,k+5,l,i-3}) = c_{n,m,k,l,i}.\]
 If $n+k = 0$ in $\mathbb{F}_2$ and $n, k > 0$, then we have  $c_{n,0,k,l,i} = c_{n-1, 2, k-1,l,i}$. This  is in the image of $d^3$,  because $n-1 + k-1 = 0$ in $\mathbb{F}_2$ and $ 2 > 0$.  If $n = 0$ in $\mathbb{F}_2$ and $n > 0$, then 
\begin{align*}
 c_{n, 0, 0,l,i}   & =   (a_1^2)^n \Delta^{-l} x^i \\
                       & =  (a_1 a_3)^2 (a_1^2)^{n-1} a_3^{-2} \Delta^{-l} x^i \\
   & = (a_1a_3)^2(a_1^2)^{n-1}((a_1a_3)a_1^2 + a_3^2)\Delta^{-1}\Delta^{-l} x^i \\ 
           &  = c_{n,3,0,l+1,i} + c_{n-1,2,1,l+1,i},         
 \end{align*} 
 and both of these summands   are in the image of $d^3$.  Furthermore, note that in $E^4_{*,*}$ we have
 \[ \Delta = (a_1 a_3)^3  + a_3^4 = c_{0,3,0,0,0} + a_3^4 =  a_3^4.\] 
 This implies that for $k = 0$ in $\mathbb{F}_2$ we have
 \[  c_{0,0,k,l,i}   = (a_3^{4})^{\frac{k}{2}} \Delta^{-l} x^i \equiv \Delta^ {-l + \frac{k}{2}} x^i  \] 
 in $E^4_{*,*}$.  We thus get a surjective map $\mathbb{F}_2[x^{\pm}, \Delta^{\pm}] \to E^4_{*,*}$, which is injective, because the classes $\Delta^lx^i$ for $l,i \in \mathbb{Z}$ are not divisible by $(a_1a_3)$  in $S_*[\Delta^{-1}][x^{\pm}]$. 
 
 From Mahowald-Rezk's computations we get that the next non-trivial  differential is $d^7$ and that we have
 \[ d^7(x) = 0 \text{ and }  d^7(\Delta) = x^7\Delta^{-4} . \] 
 This gives $E^8_{*,*}  = 0$. 
 
 We now want to determine the behaviour of the Tate spectral sequence 
 \begin{equation} \label{tate2}
 E^2_{n,m} = \hat{H}^{-n}(C_2; \pi_m \tmf_1(3)_{(2)}) \Longrightarrow \pi_{n+m} \tmf_1(3)_{(2)}^{tC_2}. 
 \end{equation}
 If we assign again odd weight to $a_1$, $a_3$ and $\zeta$, then the $E^2$-page is the even part of
 \[ \mathbb{Z}/{2\mathbb{Z}}[a_1, a_3][\zeta^{\pm}], \] 
 and one sees that the map of spectral sequences from \eqref{tate2} to \eqref{tate1}  is injective. 
 We get that $d^3$ is the first non-trivial differential in \eqref{tate2}  and that we have
 \begin{align*}
  d^3(a_1a_3)  & = 0,    &  d^3(a_1^2) & = (a_1 \zeta)^3,  \\ 
  d^3(a_3^2 ) & = a_1 a_3^2 \zeta^3,  &  d^3(a_3\zeta) & = (a_1a_3) \zeta^4, \\ 
 d^3(a_1 \zeta) & = 0, &  d^3(\zeta^2) & = a_1 \zeta^5.
 \end{align*}
 Note that an $\mathbb{F}_2$-basis of the $E^3$-page is given by the classes
 \begin{align*}
d_{n,m,i} = &  (a_1^2)^n (a_3^2)^m (\zeta^2)^i, \\
 e_{n,m,i} = & (a_1^2)^n(a_1a_3)(a_3^2)^m (\zeta^2)^i,   \\
f_{n,m,i} = & (a_1^2)^n (a_3^2)^m (a_1 \zeta) (\zeta^2)^i,   \\
g_{n,m,i} = & (a_1^2)^n(a_3^2)^m (a_3 \zeta) (\zeta^2)^i, &  
 \end{align*}
for $n,m \in \mathbb{N}$ and $i \in \mathbb{Z}$.

 The $d^3$-differential on these classes is  given by 
 \begin{align*}
 d^3(d_{n,m,i}) & =   (n+m+i) \cdot f_{n,m,i+1},   \\
 d^3(e_{n,m,i}) & =  (n+m+i) \cdot g_{n+1, m, i+1}, \\ 
 d^3(f_{n,m,i}) & =  (n+m+i) \cdot  d_{n+1,m, i+2}, \\ 
 d^3(g_{n,m,i}) & =  (n+m+i+1) \cdot e_{n,m,i+2}.
 \end{align*} 
 We get  
 \[ E^4_{*,*} =  \bigoplus_{\substack{m \in \mathbb{N}, i \in \mathbb{Z}\\ m+i = 0 \; \text{in} \; \mathbb{F}_2}} \mathbb{F}_2\{d_{0,m,i} \}  \oplus \bigoplus_{\substack{m \in \mathbb{N}, i \in \mathbb{Z} \\ m + i+ 1 = 0 \; \text{in}  \; \mathbb{F}_2} }\mathbb{F}_2\{g_{0, m, i}\}. \] 
 The map of spectral sequences from \eqref{tate2} to \eqref{tate1} 
 satisfies  
 \[ d_{0,m,i} \mapsto \Delta^{\frac{m-3i}{2}}x^{2i}, \qquad  g_{0, m,i} \mapsto \Delta^{\frac{m-3i-1}{2}} x^{2i+1}. \] 
 In particular, one sees that it is injective on $E^4$-pages. 
 We conclude that the next non-trivial differential in spectral sequence \eqref{tate2} is $d^7$ and that we have
 \[ d^7(d_{0,m,i}) = \frac{m-3i}{2} g_{0, m, i+3},  \qquad 
   d^7(g_{0,m,i}) = \frac{m-3i-1}{2} d_{0,m+1,i+4}. \]
 We obtain that
 \[ E^8_{*,*} = \bigoplus_{i \in \mathbb{Z}} \mathbb{F}_2\{d_{0,0,4i}\} =  \bigoplus_{i \in \mathbb{Z}}  \mathbb{F}_2 \{\zeta^{8i}\}. \] 
 Since the $E^8$-page is concentrated in the zeroth row, the spectral sequence collapses at this stage. 
This gives the 
answer on the level of homotopy groups. As $\tmf_1(3)^{tC_2}$ is an
$E_\infty$-ring spectrum \cite{mcc} it is in particular an $E_2$-ring
spectrum and therefore a result by Hopkins-Mahowald (see
\cite[Theorem 4.18]{mnn})
implies that $\tmf_1(3)^{tC_2}$ receives a map from $H\F_2$ and
therefore is a generalized Eilenberg-MacLane spectrum of the claimed
form. 
\end{proof}

\begin{thm}
The $\Sigma_3$-action on $\tmf(2)_{(3)}$ yields
\[ \tmf(2)_{(3)}^{t\Sigma_3} \simeq \bigvee_{i \in \Z}
  \Sigma^{12 i} H\Z/3\Z. \]
\end{thm}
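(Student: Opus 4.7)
The plan is to follow the strategy of the previous theorem: compute the Tate spectral sequence
\[ E_2^{s,t} = \hat{H}^{-s}(\Sigma_3; \pi_t\tmf(2)_{(3)}) \Longrightarrow \pi_{s+t}\tmf(2)_{(3)}^{t\Sigma_3}, \]
identify its $E_\infty$-page as concentrated in total degrees $12i$, $i\in\Z$, with a single copy of $\F_3$ in each, and conclude via the Hopkins--Mahowald theorem \cite[Theorem 4.18]{mnn} that the $E_\infty$-ring spectrum $\tmf(2)_{(3)}^{t\Sigma_3}$ (which is $E_\infty$ by \cite{mcc}) splits as the asserted wedge of suspensions of $H\Z/3\Z$.

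Since $[\Sigma_3:C_3]=2$ is invertible at $3$, transfer identifies $\hat{H}^*(\Sigma_3;M)\cong \hat{H}^*(C_3;M)^{C_2}$. A generator $\sigma$ of $C_3\subset\Sigma_3$ permutes the nontrivial $2$-torsion points cyclically; with the convention $\lambda_1=e_2-e_1$, $\lambda_2=e_3-e_1$ this gives $\sigma(\lambda_1)=\lambda_2-\lambda_1$, $\sigma(\lambda_2)=-\lambda_1$, which becomes unipotent modulo $3$. The change of basis $x:=\lambda_1$, $y:=\lambda_1+\lambda_2$ converts $\pi_*\tmf(2)_{(3)}/3\cong\F_3[x,y]$ into the standard modular $C_3$-representation $\sigma(x)=x+y$, $\sigma(y)=y$, for which classical invariant theory identifies the $C_3$-invariants as $\F_3[y,N(x)]$, where $N(x)=x(x+y)(x-y)=x^3-xy^2$ has tmf-degree $12$. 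A direct computation shows that $\mathrm{Sym}^n V$ decomposes over $\F_3[C_3]$ into free summands plus at most one non-free Jordan block, yielding at most one $\F_3$ per Tate degree in each polynomial degree $n$; in $\hat{H}^0$ one obtains $\F_3[y,N(x)]/(y^2)$ after quotienting by the norm image. Taking $C_2$-invariants under the transposition $\tau$, which fixes $y$, sends $x\mapsto y-x$, flips the sign of $N(x)$, and acts as $-1$ on each of $\alpha\in\hat{H}^1(C_3;\F_3)$ and $\beta\in\hat{H}^2(C_3;\F_3)$, produces the $E_2$-page for $\Sigma_3$.

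For the differentials I compare with the periodic case. Mathew--Meier \cite{mm15} show $\TMF(2)_{(3)}^{t\Sigma_3}\simeq *$, so the corresponding periodic Tate spectral sequence converges to zero; the differentials in the connective spectral sequence are then forced by those in the periodic one wherever both sides are defined, and a treatment parallel to the $d_3$ and $d_7$ analysis of the preceding theorem leaves exactly one $\F_3$ in each total degree $\equiv 0\pmod{12}$, generated by the powers of $N(x)$ and their Tate inverses. The principal obstacle is the sign-tracking and bookkeeping involved in combining the $C_3$-Tate spectral sequence with the $C_2$-invariants and the comparison to the periodic case; once the $E_\infty$-page is identified as asserted, the final Hopkins--Mahowald step completes the proof exactly as in the preceding theorem.
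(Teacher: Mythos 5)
Your overall strategy coincides with the paper's: run the Tate spectral sequence for $\tmf(2)_{(3)}^{t\Sigma_3}$, determine its fate by comparing with the periodic Tate spectral sequence (which converges to zero), observe the $E_\infty$-page is concentrated in total degrees $\equiv 0 \pmod{12}$ with one $\F_3$ each, and invoke the Hopkins--Mahowald theorem for the $E_\infty$-ring $\tmf(2)_{(3)}^{t\Sigma_3}$ to conclude it is a generalized Eilenberg--MacLane spectrum. Where you genuinely diverge is in the derivation of the $E_2$-page: you recompute $\hat H^*(\Sigma_3;\pi_*\tmf(2)_{(3)})$ from scratch via $\hat H^*(C_3;-)^{C_2}$, using the modular $C_3$-representation on $\F_3[\lambda_1,\lambda_2]$ and the classical invariant theory ($\F_3[y, x^3-xy^2]$, the decomposition of $\mathrm{Sym}^n$ of a Jordan block, etc.). The paper instead takes Stojanoska's description of the $E_2$-page for $\Tmf(2)_{(3)}^{t\Sigma_3}$ off the shelf as $\Z/3[\alpha,\beta^{\pm},\Delta^{\pm}]/\alpha^2$ (with $|\alpha|=(-1,4)$, $|\beta|=(-2,12)$, $|\Delta|=(0,24)$), together with the differentials $d^5(\Delta)=\alpha\beta^2$ and $d^9(\alpha\Delta^2)=\beta^5$, and then intersects with the connective part by exploiting that $\tmf(2)_{(3)}$ is the connective cover of $\Tmf(2)_{(3)}$. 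Your route is more self-contained and concrete; the paper's is shorter because the hard work is already in the literature.

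Two specific gaps. First, your differential analysis is only waved at: you say ``parallel to the $d_3$ and $d_7$ analysis of the preceding theorem,'' but in the present $\Sigma_3$-situation the nontrivial differentials are $d^5$ and $d^9$ (with lengths determined by Stojanoska's computation for $\Tmf(2)_{(3)}$, not by the $C_2$-case for $\tmf_1(3)_{(2)}$). More substantively, the hinge of the paper's argument is checking that the map from the connective Tate $E_r$-page to the periodic one is \emph{injective} on the $E_2$- and $E_6$-pages, so that the periodic differentials actually determine the connective ones; without that injectivity claim you cannot simply say the differentials are ``forced.'' Second, your description of the $E_\infty$-page as ``powers of $N(x)$ and their Tate inverses'' (filtration $0$) does not match what the calculation gives: the surviving classes are the $(\beta^{-6}\Delta^3)^k$, which sit on the line $m=0$ in filtration $12k$, not on the column $s=0$. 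The total degrees agree, so your final statement about $\pi_*$ would still be correct, but the intermediate description of where the survivors live is wrong and would need fixing before one could trust the multiplicative structure or extension arguments. Lastly, note that the comparison spectrum should be $\Tmf(2)_{(3)}$ (the compactified version, as in Stojanoska), since $\tmf(2)_{(3)}$ is its connective cover; the inclusion $\pi_*\tmf(2)_{(3)} \hookrightarrow \pi_*\TMF(2)_{(3)}$ does induce a map of Tate spectral sequences, but you would then need to re-establish the differential structure of the $\TMF(2)$ spectral sequence, which is exactly what citing Stojanoska for $\Tmf(2)$ avoids.
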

\begin{proof}
We use 
the calculation of \cite{st}. She proves that
$\Tmf(2)_{(3)}^{t\Sigma_3} \simeq *$ via the Tate spectral sequence  
\[ E^2_{n.m}  = \hat{H}^{-n}\bigl(\Sigma_3; \pi_m(\Tmf(2)_{(3)})\bigr)   \Longrightarrow \pi_{n+m}(\Tmf(2)_{(3)}^{t\Sigma_3}).\] 
The $E^2$-page is given by 
\[ \mathbb{Z}/{3\mathbb{Z}}[\alpha, \beta^{\pm}, \Delta^{\pm}]/{\alpha^2}\] 
with $| \alpha | = (-1,4)$, $| \beta | = (-2,12)$ and $| \Delta | = (0,24)$, and the differentials are determined by
\[ d^5(\Delta) = \alpha \beta^2 \quad \text{and} \quad d^9(\alpha \Delta^2) = \beta^5.\] 
Since  $\tmf(2)_{(3)}$ is the connective cover of $\Tmf(2)_{(3)}$ the $E^2$-page of the Tate spectral sequence
\[ \bar{E}^2_{n,m} = \hat{H}^{-n}\bigl(\Sigma_3; \pi_m(\tmf(2)_{(3)})\bigr) \Longrightarrow \pi_{n+m}(\tmf(2)_{(3)}^{t\Sigma_3}) \] 
is the $\mathbb{Z}/{3\mathbb{Z}}$-module 
\[ \bigoplus_{\substack{k,l \in \mathbb{Z} \\ k + 2l \geq 0}} \mathbb{Z}/{3\mathbb{Z}}\{\beta^k \Delta^l\}  \oplus \bigoplus_{\substack{k, l \in \mathbb{Z}  \\ 1 + 3k + 6l \geq 0}} \mathbb{Z}/{3\mathbb{Z}}\{ \alpha \beta^k \Delta^l\}. \] 
Using the map of Tate spectral sequences  $\bar{E}^*_{*,*} \to E^*_{*,*}$   one sees that
\[ \bar{E}^6_{*,*} =  \bigoplus_{\substack{k,l \in \mathbb{Z} \\  k+6l \geq 0}}  \mathbb{Z}/{3 \mathbb{Z}} \{\beta^k (\Delta^{3})^l\}  \oplus \bigoplus_{\substack{k, l \in \mathbb{Z} \\ 13 + 3k + 18l  \geq 0}}  \mathbb{Z}/{3 \mathbb{Z}}\{ (\alpha \Delta^2)\beta^k (\Delta^{3})^l\}.\] 
Since $E^6_{*,*} = \mathbb{Z}/{3\mathbb{Z}}[\alpha \Delta^2, \beta^{\pm}, \Delta^{\pm 3}]/{(\alpha \Delta^2)^2}$  the map $\bar{E}^6_{*,*} \to E^6_{*,*}$ is injective. Thus,  $\bar{d}^9$ is determined by $d^9$  and one gets
\[ \bar{E}^{10}_{*,*}  = \bigoplus_{k \in \mathbb{Z}} \mathbb{Z}/{3\mathbb{Z}}\{(\beta^{-6} \Delta^3)^k\}. \] 
The class $\beta^{-6}\Delta^3$ has bidegree $(12,0)$, and so $\bar{E}^{10}_{*,*}$ is concentrated in line zero and the spectral sequence collapses at this stage. 
  \end{proof}

So we can view the extensions $ko_{(2)} \ra ku_{(2)}$, and $\tmf_{(3)} \ra
\tmf(2)_{(3)}$ as being wildly ramified and
$\tmf_0(3)_{(2)} \ra \tmf_1(3)_{(2)}$ has a non-trivial Tate construction.

In contrast, $KO \ra KU$ is a faithful 
  $C_2$-Galois \cite[\S 5]{rognes}, $\TMF_0(3) \ra \TMF_1(3)$ and
  $\Tmf_0(3) \ra \Tmf_1(3)$ are both faithful $C_2$-Galois extensions
  \cite[Theorem 7.12]{mm15}. In general, $\TMF[1/n] \ra \TMF(n)$ is a faithful
  $GL_2(\Z/n\Z)$-Galois extension \cite[Theorem 7.6]{mm15} and the Tate
  spectrum $\Tmf(n)^{tGL_2(\Z/n\Z)}$ is contractible \cite[Theorem
  7.11]{mm15}. 

\bigskip
For general $n$, constructions of $\tmf_1(n)$ and $\tmf_0(n)$ are tricky: 
For some large $n$, $\pi_1\Tmf_1(n)$ is non-trivial. Lennart Meier constructs
a connective version of $\Tmf_1(n)$ with trivial $\pi_1$ as an $E_\infty$-ring
spectrum  in  \cite{meier}, so that there are $E_\infty$-models of
$\tmf_1(n)$ for all $n$. 

As $\pi_0(\tmf(n)) \cong \Z[\frac{1}{n}, \zeta_n]$ where $\zeta_n$ is
a primitive $n$th root of unity, the defining cofiber sequence of
$\tmf(n)^{tGL_2(\Z/n\Z)}$ gives
\[ \xymatrix@1{{\ldots} \ar[r]& \pi_0\tmf(n)_{hGL_2(\Z/n\Z)} \ar[r]^N &
    \pi_0\tmf(n)^{hGL_2(\Z/n\Z)} \ar[r] & \pi_0
    \tmf(n)^{tGL_2(\Z/n\Z)} \ar[r] & 0}   \]
because $\tmf(n)_{hGL_2(\Z/n\Z)}$ is connective.

By the homotopy orbit spectral sequence we get that
$\pi_0\tmf(n)_{hGL_2(\Z/n\Z)} \cong \Z[\frac{1}{n}, \zeta_n]_{GL_2(\Z/n\Z)}$.
As $\tau_{\geq 0} \tmf(n)^{hGL_2(\Z/n\Z)} \simeq
\tmf[\frac{1}{n}]$, we know that $\Z[\frac{1}{n}] \cong
\pi_0\tmf[\frac{1}{n}]$.

For every $n > 1$ we have  $|GL_2(\Z/n\Z)| = \varphi(n)n^3\prod_{p
  \mid n}(1-\frac{1}{p^2})$. Here, $\varphi$ denotes the Euler
$\varphi$-function and $p$ runs over all primes dividing $n$.

Meier shows \cite{meier}, that $\tmf(n)$ is a perfect $\tmf[1/n]$-module  
spectrum and hence dualizable. In general we do not know whether $\tmf(n)$ is
faithful as a $\tmf[1/n]$-module. For $n=2$, $\tmf(2)_{(3)}$ is faithful and dualizable over $\tmf_{(3)}$, as we saw in Remark \ref{rem:ffdual}.  

We cannot determine the homotopy type of the $GL_2(\Z/n\Z)$-Tate construction
of $\tmf(n)$ for arbitrary $n$, but we can identify cases where it is non-trivial:
\begin{thm}
 Assume $n \geq 2$, $2 \nmid n$ or $n = 2^k$ for some $k \geq 1$. Then 
  \[ \tmf(n)^{tGL_2(\Z/n\Z)} \not\simeq *.\]
\end{thm}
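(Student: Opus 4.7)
The plan is to show that $\pi_0\tmf(n)^{tGL_2(\Z/n\Z)} \neq 0$, which is enough to conclude non-triviality of the Tate spectrum. The preamble to the theorem already supplies the exact sequence
\[
\pi_0\tmf(n)_{hGL_2(\Z/n\Z)} \xrightarrow{\,N\,} \pi_0\tmf(n)^{hGL_2(\Z/n\Z)} \ra \pi_0\tmf(n)^{tGL_2(\Z/n\Z)} \ra 0,
\]
together with the identifications $\pi_0\tmf(n)_{hGL_2(\Z/n\Z)} \cong \Z[\frac{1}{n},\zeta_n]_{GL_2(\Z/n\Z)}$ (via the homotopy orbit spectral sequence) and $\pi_0\tmf(n)^{hGL_2(\Z/n\Z)} \cong \Z[\frac{1}{n}]$ (via $\tau_{\geq 0}\tmf(n)^{hGL_2(\Z/n\Z)} \simeq \tmf[\frac{1}{n}]$). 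It therefore suffices to prove that the norm $N$ is not surjective.

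The key input is that the $GL_2(\Z/n\Z)$-action on $\pi_0\tmf(n) = \Z[\frac{1}{n},\zeta_n]$ factors through the determinant $GL_2(\Z/n\Z) \twoheadrightarrow (\Z/n\Z)^\times$, which acts on $\zeta_n$ via the cyclotomic character $\sigma_d \colon \zeta_n \mapsto \zeta_n^d$; this is the shadow on $\pi_0$ of the Weil pairing. Under the identifications above, the norm corresponds to the elementary group-theoretic sum $x \mapsto \sum_g g\cdot x$, and grouping by the value of the determinant yields
\[
N(x) \;=\; |SL_2(\Z/n\Z)| \cdot \sum_{d \in (\Z/n\Z)^\times} \sigma_d(x),
\]
so the image of $N$ is contained in $|SL_2(\Z/n\Z)| \cdot \Z[\frac{1}{n}]$.

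It then remains to exhibit a prime divisor of $|SL_2(\Z/n\Z)| = n^3\prod_{p \mid n}(1-p^{-2})$ that is not invertible in $\Z[\frac{1}{n}]$. If $n$ is odd, then every prime $p \mid n$ is odd, so $p+1$ is even, and hence $2 \mid |SL_2(\Z/n\Z)|$, while $2 \not\in \Z[\frac{1}{n}]^\times$. If $n = 2^k$, a direct computation gives $|SL_2(\Z/2^k\Z)| = 2^{3k-2}\cdot 3$, and $3 \not\in \Z[\frac{1}{2}]^\times$. In either case $|SL_2(\Z/n\Z)|\cdot \Z[\frac{1}{n}]$ is a proper subgroup of $\Z[\frac{1}{n}]$, so $\pi_0\tmf(n)^{tGL_2(\Z/n\Z)}$ is non-zero.

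The delicate step is the identification of $N$ with the elementary group-theoretic norm under the two edge-map identifications of $\pi_0$; once this naturality is in hand, the theorem reduces to the easy arithmetic of $|SL_2(\Z/n\Z)|$ above. The restriction on $n$ in the hypothesis is also visible from this argument: for $n$ divisible by both $2$ and an odd prime (e.g.\ $n = 6$, for which $|SL_2(\Z/6\Z)| = 144$) every prime dividing $|SL_2(\Z/n\Z)|$ may already be inverted in $\Z[\frac{1}{n}]$, so this elementary obstruction on $\pi_0$ disappears and a different method would be needed.
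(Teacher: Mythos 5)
Your proof is correct and essentially mirrors the paper's: both reduce to the observation that the norm $N \colon \Z[\tfrac1n,\zeta_n]_{GL_2(\Z/n\Z)} \to \Z[\tfrac1n]$ factors through multiplication by $|SL_2(\Z/n\Z)|$ (since the action on $\zeta_n$ runs through the determinant), and that this integer fails to be a unit in $\Z[\tfrac1n]$ under the stated hypotheses. Your phrasing---that the image of $N$ lies in the principal ideal $\bigl(|SL_2(\Z/n\Z)|\bigr)$ because $N(x) = |SL_2(\Z/n\Z)|\cdot\mathrm{Tr}_{\Q(\zeta_n)/\Q}(x)$---is a marginally cleaner packaging of what the paper does by evaluating $N$ on powers of $\zeta_n$ and case-splitting via the M\"obius function, but it is the same argument, and the closing arithmetic (the factor $2$ from $p\pm1$ when $n$ is odd, the factor $3$ when $n=2^k$) is identical.
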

\begin{proof}
  By \cite[p.~282]{km} we know that $GL_2(\Z/n\Z)$ acts on $\zeta_n$ via the
  determinant map: For $A \in GL_2(\Z/n\Z)$ and $\zeta_n$ we get
  \[ A.\zeta_n^i = \zeta_n^{i\cdot \det(A)}. \]
  Therefore, the norm map $N \colon \Z[\frac{1}{n}, \zeta_n]_{GL_2(\Z/n\Z)} \ra
  \Z[\frac{1}{n}]$ sends $\zeta_n^i$ to 
  \[ \sum_{A \in GL_2(\Z/n\Z)} \zeta_n^{i \cdot \det(A)} =
    |SL_2(\Z/n\Z)|\sum_{r \in (\Z/n\Z)^\times} \zeta_n^{ir}, \]
  in particular it sends a primitive $n$-th root of unity $\zeta$ to 
  $|SL_2(\Z/n\Z)|\mu(n)$ with $\mu$ denoting the M\"obius function.

\medskip 
  If $n$ is
  square-free, then $\mu(n) = \pm 1$. Any power of $\zeta_n$ has order $d$ with
  $d \mid n$, so this $d$ is squarefree as well, so $N(\zeta_n^i)$ is equal
  to $|SL_2(\Z/n\Z)|\mu(n)$ or a multiple of it.
  
  \medskip
  If $n$ contains a square of a prime, then $\mu(n) = 0$, so $N(\zeta_n) = 0$
  but of course $N(1) = |GL_2(\Z/n\Z)|$. If $d \mid n$, and $d$ is squarefree,
  then the corresponding power of $\zeta_n$ can give a non-trivial multiple of
  $|SL_2(\Z/n\Z)|$.

  If $2 \nmid n$ then $|GL_2(\Z/n\Z)|$ and $|SL_2(\Z/n\Z)|$ are not units in
  $\Z[\frac{1}{n}]$:
  Let $p$ be an odd prime factor of $n$. Then in $n^3\prod_{p
    \mid n}(1-\frac{1}{p^2})$ we have a factor of $p-1$ and this is even, but $2$ is
  not invertible in $\Z[\frac{1}{n}]$.

  If $n = 2^k$ for some $k \geq 1$, we obtain 
  \[ |SL_2(\Z/n\Z)| = 2^{3k}\frac{3}{4} \]
  which contains $3$ as a non-invertible factor. 
\end{proof}

\begin{rem}
  For many $n$ the Tate construction $\tmf(n)^{tGL_2(\Z/n\Z)}$ is actually trivial. If
  $n = 2^k3^\ell$ with $k, \ell \geq 1$ for instance, the order of $GL_2(\Z/n\Z)$ is  
  invertible in $\Z[\frac{1}{n}]$. Similarly, if $n = 2 \cdot 3 \cdot \ldots \cdot p_m$
  is the product of the first $m$ prime numbers for any $m \geq 2$, then the group order is invertible as well. 
\end{rem}


We close with a periodic example: Let $E_n$ be the Lubin-Tate 
spectrum for the Honda formal group law over $W(\F_{p^n})$. For any finite group $G$, $F(BG_+, E_n) \ra E_n$ is faithful in the $K_n$-local category
\cite[Theorem 4.4]{br}. At the moment we don't know whether $E_n$ is a
dualizable $F(BG_+, E_n)$-module for any finite group $G$. 

In \cite[Theorem 5.1]{br} it is shown that $F((BC_{p^r})_+, E_n) \ra E_n$ is
ramified and one can  also  
consider more general groups than $C_{p^r}$, but the type of ramification was 
\emph{not} determined. The
following result indicates that $F((BC_{p^r})_+, E_n) \ra E_n$ is
wildly ramified. 

\begin{thm} \label{thm:fbgenwild}
  For all $r \geq 1$ and $n \geq 1$
  \[ E_n^{tC_{p^r}} \not\simeq \ast. \] 
\end{thm}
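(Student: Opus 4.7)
The plan is to analyse the Tate spectral sequence, exploiting that the $C_{p^r}$-action on $E_n$ implicit in the setup $F((BC_{p^r})_+, E_n) \to E_n$ is the \emph{trivial} action, since $F((BC_{p^r})_+, E_n) \simeq E_n^{hC_{p^r}}$ precisely when $C_{p^r}$ acts trivially on $E_n$. First I would write down the conditionally convergent spectral sequence
\[ E^2_{s,m} = \hat{H}^{-s}(C_{p^r}; \pi_m E_n) \Longrightarrow \pi_{s+m} E_n^{tC_{p^r}}. \]
Since the action is trivial and $\pi_* E_n \cong W(\F_{p^n})[[u_1, \ldots, u_{n-1}]][u^{\pm 1}]$ is torsion-free and concentrated in even degrees, the standard computation of the Tate cohomology of a cyclic group with trivial coefficients in a torsion-free module $M$ gives $\hat{H}^{2k}(C_{p^r}; M) \cong M/p^r$ and $\hat{H}^{2k+1}(C_{p^r}; M) = 0$. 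Hence $E^2_{s,m}$ is nonzero precisely when both $s$ and $m$ are even, and in that case equals $\pi_m E_n / p^r$, which is nonzero because $W(\F_{p^n})$ has no $p$-torsion.

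Next I would establish collapse by a parity argument. The nonzero entries of $E^2$ all sit in bidegrees of \emph{even} total degree $s+m$, while each differential $d_r$ decreases total degree by $1$ and therefore has its target in a bidegree of odd total degree, where the page vanishes. Consequently every differential is zero and the spectral sequence collapses at $E^2$. Since the Tate spectral sequence is conditionally convergent and collapses at a finite stage, it converges strongly by \cite[Theorem 8.2]{boa}, so $E^\infty = E^2$ is the associated graded of a filtration of $\pi_* E_n^{tC_{p^r}}$.

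Finally, the filtration quotient $E^\infty_{0,0} \cong W(\F_{p^n})[[u_1, \ldots, u_{n-1}]]/p^r$ is nonzero, so $\pi_0 E_n^{tC_{p^r}} \ne 0$ and therefore $E_n^{tC_{p^r}} \not\simeq *$. The only conceptual point to verify is the identification of the action as trivial (so that the cochain construction really is the homotopy fixed points); after that, the argument is a short and clean parity collapse of the Tate spectral sequence, with no convergence or extension issues to worry about.
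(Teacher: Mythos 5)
Your proposal is correct and follows essentially the same route as the paper: identify the $E^2$-term of the Tate spectral sequence using the triviality of the $C_{p^r}$-action and torsion-freeness of $\pi_*E_n$, observe that the nonzero entries sit entirely in even (bi)degree so all differentials vanish for parity reasons, and conclude nontriviality of $\pi_*E_n^{tC_{p^r}}$. Your explicit appeal to Boardman for strong convergence is a clean touch that the paper handles via its earlier remark on the Tate spectral sequence, but the substance of the argument is the same.
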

\begin{proof}
  The Tate spectral sequence
\[ E_2^{s,t} = \hat{H}^{-s}(C_{p^r}; \pi_tE_n) \Rightarrow
  \pi_{s+t}(E_n^{tC_{p^r}}) \]
has as $E^2$-term
\[  \hat{H}^{-s}(C_{p^r}; \pi_tE_n) \cong \begin{cases}
    \pi_tE_n^{C_{p^r}}/p^r = \pi_tE_n/p^r, & \text{ for } s \text{ even}, \\
  \ker(N)/\text{im}(t-1) = 0, & \text{ for } s \text{ odd}.\end{cases}   \] 
As $\pi_*(E_n)$ is concentrated in even degrees, the whole $E_2$-term
is concentrated in bidegrees $(s,t)$ where $s$ and $t$ are
even. Therefore, all differentials have to be trivial and $E_2 =
E_\infty$. Thus $\pi_*(E_n^{tC_{p^r}})$ is highly non-trivial.   
\end{proof}

\begin{prop}
Assume that $G$ is a finite group with a non-trivial cyclic subgroup $C_{p^k}
< G$ for some prime $p$. Then $E_n^{tG}$ is non-trivial when $E_n$ is the
Lubin-Tate spectrum at the prime $p$. 
\end{prop}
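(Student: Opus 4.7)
The plan is to bootstrap from Theorem \ref{thm:fbgenwild} via the standard restriction map on Tate constructions. Since $C_{p^k}$ is a non-trivial cyclic $p$-subgroup of $G$, Theorem \ref{thm:fbgenwild} already asserts $E_n^{tC_{p^k}} \not\simeq \ast$, so the task reduces to propagating non-vanishing from the subgroup up to the full group.

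The key input is that for any subgroup $H \leq G$ and any $G$-equivariant commutative ring spectrum $R$, the forgetful functor from $G$-spectra to $H$-spectra induces a restriction map
\[ R^{tG} \longrightarrow R^{tH} \]
which is a map of commutative ring spectra; the norm cofiber sequence defining the Tate construction is compatible with passage to subgroups, and the multiplicative structure on Tate is inherited from that on $R$. I will apply this with $R = E_n$ (with the given $G$-action on the Lubin-Tate spectrum, matching the setup of Theorem \ref{thm:fbgenwild}) and $H = C_{p^k}$.

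With this restriction map in place, the argument is essentially immediate: if $E_n^{tG} \simeq \ast$, then $1 = 0$ in the (trivial) ring spectrum $E_n^{tG}$, and since a ring map sends units to units, the same equation $1 = 0$ holds in $E_n^{tC_{p^k}}$. This would force $E_n^{tC_{p^k}} \simeq \ast$, contradicting Theorem \ref{thm:fbgenwild}. Hence $E_n^{tG} \not\simeq \ast$, as claimed.

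There is no serious obstacle here; the only point worth care is verifying that restriction really furnishes a \emph{ring} map on Tate rather than merely a map of underlying spectra, but this is a standard feature of the Tate construction applied to (equivariant commutative) ring spectra. No computation with the Tate spectral sequence for $G$ is required: the non-vanishing is forced purely by the multiplicative compatibility of restriction with the non-trivial cyclic $p$-subgroup already treated in Theorem \ref{thm:fbgenwild}.
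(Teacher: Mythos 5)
Your proposal is correct and takes essentially the same approach as the paper: both deduce nonvanishing by restricting along $C_{p^k} < G$ to obtain a ring map $E_n^{tG} \to E_n^{tC_{p^k}}$ and invoking Theorem \ref{thm:fbgenwild}. The paper supplies the precise references you gesture at (McClure for the multiplicativity of Tate under subgroup inclusion, Greenlees--May for the identification $t((E_n)_G)|_H \simeq t((E_n)_H)$), but the argument is identical.
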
  
\begin{proof}
The restriction map induces a map on Tate constructions $E_n^{tG} \ra
E_n^{t{C_{p^k}}}$. McClure \cite{mcc} shows that the $E_\infty$-structure on Tate 
constructions $E_n^{tG} = t((E_n)_G)^G$ is compatible with inclusions
of subgroups and 
Greenlees-May show \cite[Proposition 3.7]{gm} that for any subgroup $H
< G$ the $H$-spectrum $t((E_n)_G)$ is equivalent to
$t((E_n)_H)$. Therefore the inclusion of fixed points $t((E_n)_G)^G
\ra t((E_n)_G)^H$ is a map of $E_\infty$-ring spectra. As we know that
$E_n^{tC_{p^k}} = t((E_n)_G)^{C_{p^k}}$ is non-trivial by Theorem
\ref{thm:fbgenwild}, $E_n^{tG}$ cannot be trivial, either.  

\end{proof}  

\begin{bibdiv}
\begin{biblist}

\bib{ab59}{article}{
  AUTHOR = {Auslander, Maurice},
  author={Buchsbaum, David A.},
     TITLE = {On ramification theory in noetherian rings},
   JOURNAL = {Amer. J. Math.},
  FJOURNAL = {American Journal of Mathematics},
    VOLUME = {81},
      YEAR = {1959},
     PAGES = {749--765},
}

\bib{ausoni}{article}{
    AUTHOR = {Ausoni, Christian},
     TITLE = {On the algebraic {$K$}-theory of the complex {$K$}-theory
              spectrum},
   JOURNAL = {Invent. Math.},
  FJOURNAL = {Inventiones Mathematicae},
    VOLUME = {180},
      YEAR = {2010},
    NUMBER = {3},
     PAGES = {611--668},
}

\bib{AuTHHku}{article}{
    AUTHOR = {Ausoni, Christian},
     TITLE = {Topological {H}ochschild homology of connective complex
              {$K$}-theory},
   JOURNAL = {Amer. J. Math.},
    VOLUME = {127},
      YEAR = {2005},
    NUMBER = {6},
     PAGES = {1261--1313}
}

\bib{ar}{article}{
  author={Ausoni, Christian},
  author={Rognes, John}, 
title={The chromatic red-shift in algebraic K-theory},
journal={Enseign. Math.}, 
volume={54},
year={2008},
number={2},
pages={9--11}, 
}

\bib{BGR08}{article}{  
author={Baker, Andrew},
author={Gilmour, Helen},
author={Reinhard, Philipp},
title={Topological Andr\'e-Quillen homology for cellular commutative
  $S$-algebras},
journal={Abh. Math. Semin. Univ. Hambg.},
volume={78},
date={2008},
number={1},
pages={27--50}, 
}

\bib{br}{article}{
  author={Baker, Andrew},
  author={Richter, Birgit},
  title={Galois theory and Lubin-Tate cochains on classifying spaces},
  journal={Cent. Eur. J. Math.},
  volume={9},
  date={2011},
  number={5},
  pages={1074--1087}, 
}

\bib{bbgs}{misc}{
author={Barthel, Tobias}, 
author={Beaudry, Agn\`es}, 
author={Goerss, Paul}, 
author={Stojanoska, Vesna},
title={Constructing the determinant sphere using a Tate twist}, 
note={arXiv:1810.06651}
}

\bib{b99}{article}{
  author={Basterra, Maria},
  title={Andr\'e-Quillen cohomology of commutative $S$-algebras},
  journal={J. Pure Appl. Algebra},
  volume={144},
  date={1999},
  number={2},
  pages={111--143},  
  }

  \bib{bari}{incollection}{
    AUTHOR = {Basterra, Maria}
    author = {Richter, Birgit},
     TITLE = {({C}o-)homology theories for commutative ({$S$}-)algebras},
 BOOKTITLE = {Structured ring spectra},
    SERIES = {London Math. Soc. Lecture Note Ser.},
    VOLUME = {315},
     PAGES = {115--131},
 PUBLISHER = {Cambridge Univ. Press, Cambridge},
      YEAR = {2004},
 }

\bib{behrens}{article}{
  author={Behrens, Mark},
  title={A modular description of the $K(2)$-local sphere at the prime
    $3$},
  journal={Topology},
  volume={45},
  date={2006},
  number={2},
  pages={343--402}, 
}

\bib{boa}{incollection} {
    AUTHOR = {Boardman, J. Michael},
     TITLE = {Conditionally convergent spectral sequences},
 BOOKTITLE = {Homotopy invariant algebraic structures ({B}altimore, {MD},
              1998)},
    SERIES = {Contemp. Math.},
    VOLUME = {239},
     PAGES = {49--84},
 PUBLISHER = {Amer. Math. Soc., Providence, RI},
      YEAR = {1999}
}

\bib{cf}{book}{
editor={Cassels, John William Scott}, 
editor={Fr\"ohlich, Albrecht}, 
  TITLE = {Algebraic number theory},
    SERIES = {Proceedings of an instructional conference organized by the
              London Mathematical Society (a NATO Advanced Study Institute)
              with the support of the International Mathematical Union},
 PUBLISHER = {Academic Press, London; Thompson Book Co., Inc., Washington,
              D.C.},
      YEAR = {1967},
     PAGES = {xviii+366},
}

\bib{chr}{article}{
  AUTHOR = {Chase, Stephen U.},
  author={Harrison, David K.},
  author={Rosenberg, Alex},
     TITLE = {Galois theory and {G}alois cohomology of commutative rings},
   JOURNAL = {Mem. Amer. Math. Soc.},
  FJOURNAL = {Memoirs of the American Mathematical Society},
    VOLUME = {52},
      YEAR = {1965},
      PAGES = {15--33},
    }

\bib{cmnn}{article}{
  AUTHOR = {Clausen, Dustin},
  author={Mathew, Akhil},
  author={Naumann, Niko},
  author={Noel, Justin},
     TITLE = {Descent in algebraic {$K$}-theory and a conjecture of
              {A}usoni-{R}ognes},
   JOURNAL = {J. Eur. Math. Soc. (JEMS)},
  FJOURNAL = {Journal of the European Mathematical Society (JEMS)},
    VOLUME = {22},
      YEAR = {2020},
    NUMBER = {4},
     PAGES = {1149--1200},
}

\bib{deligne}{incollection}{ 
    AUTHOR = {Deligne, Pierre},
     TITLE = {Courbes elliptiques: formulaire d'apr\`es {J}. {T}ate},
 BOOKTITLE = {Modular functions of one variable, {IV} ({P}roc. {I}nternat.
              {S}ummer {S}chool, {U}niv. {A}ntwerp, {A}ntwerp, 1972)},
     PAGES = {53--73. Lecture Notes in Math., Vol. 476},
      YEAR = {1975},
}

\bib{tmfbook}{book}{
  editor={Douglas, Christopher L.},
  editor={Francis, John},
  editor={Henriques, Andr\'e G.},
  editor={Hill, Michael, A.}, 
  title={Topological modular forms}, 
  series={Mathematical Surveys and Monographs},
  volume={201},
  publisher={American Mathematical Society},
  place={Providence, RI},
  year={2014},
  pages={xxxii+318 pp.},
  }

    \bib{dlr}{article}{
  author={Dundas, Bj{\o}rn Ian},
  author={Lindenstrauss, Ayelet},
  author={Richter, Birgit},
  title={Towards an understanding of ramified extensions of structured ring
         spectra},
  journal={Mathematical Proceedings of the Cambridge
    Philosophical Society},
 volume={168},
date={2020},
number={3},
pages={435--454},
}

\bib{dlrerratum}{article}{
  author={Dundas, Bj{\o}rn Ian},
  author={Lindenstrauss, Ayelet},
  author={Richter, Birgit},
  title={Erratum for: Towards an understanding of ramified extensions
    of structured ring 
         spectra},
  journal={Mathematical Proceedings of the Cambridge
    Philosophical Society},
  note={to appear},
}



\bib{gm}{book}{
  AUTHOR = {Greenlees, John P. C.},
  author={May, J. Peter},
     TITLE = {Generalized {T}ate cohomology},
   JOURNAL = {Mem. Amer. Math. Soc.},
  FJOURNAL = {Memoirs of the American Mathematical Society},
    VOLUME = {113},
      YEAR = {1995},
    NUMBER = {543},
     PAGES = {viii+178},
}

\bib{hl}{article}{
   author={Hill, Michael A.},
   author={Lawson, Tyler},
   title={Topological modular forms with level structure},
   journal={Invent. Math.},
   volume={203},
   date={2016},
   number={2},
   pages={359--416},
}

\bib{hm}{article}{
    author = {Hill, Michael A.}, 
    author =  {Meier, Lennart},
     title = {The {$C_2$}-spectrum {${\rm Tmf}_1(3)$} and its invertible modules},
   JOURNAL = {Algebr. Geom. Topol.},
    VOLUME = {17},
      YEAR = {2017},
    NUMBER = {4},
     PAGES = {1953--2011},
}


\bib{km}{book}{
  AUTHOR = {Katz, Nicholas M.},
  author={Mazur, Barry},
     TITLE = {Arithmetic moduli of elliptic curves},
    SERIES = {Annals of Mathematics Studies},
    VOLUME = {108},
 PUBLISHER = {Princeton University Press, Princeton, NJ},
      YEAR = {1985},
     PAGES = {xiv+514},
}   

\bib{konter}{unpublished}{
author={Konter, Johan}, 
title={The homotopy groups of the spectrum Tmf},
note={arXiv:1212.3656}, 
}

\bib{ln}{article}{
  author={Lawson, Tyler},
  author={Naumann, Niko},
  title={Strictly commutative realizations of diagrams over the Steenrod algebra and topological modular forms at the prime 2},
  journal={Int. Math. Res. Not.},
  year={2014},
  number={10},
  pages={2773--2813},
  }

\bib{mr09}{article}{
  author={Mahowald, Mark},
  author={Rezk, Charles},
  title={Topological modular forms of level $3$},
  journal={Pure Appl. Math. Q.},
  volume={5},
  date={2009},
  number={2},
  note={Special Issue: In honor of Friedrich Hirzebruch. Part 1},
  pages={853--872}, 
}

\bib{m16}{article}{
    AUTHOR = {Mathew, Akhil},
     TITLE = {The {G}alois group of a stable homotopy theory},
   JOURNAL = {Adv. Math.},
  FJOURNAL = {Advances in Mathematics},
    VOLUME = {291},
      YEAR = {2016},
     PAGES = {403--541},
}

\bib{mathewh}{article}{
    AUTHOR = {Mathew, Akhil},
     TITLE = {The homology of tmf},
   JOURNAL = {Homology Homotopy Appl.},
  FJOURNAL = {Homology, Homotopy and Applications},
    VOLUME = {18},
      YEAR = {2016},
    NUMBER = {2},
     PAGES = {1--29},
}

\bib{mm15}{article}{
  AUTHOR = {Mathew, Akhil},
  author= {Meier, Lennart},
     TITLE = {Affineness and chromatic homotopy theory},
   JOURNAL = {J. Topol.},
     VOLUME = {8},
      YEAR = {2015},
    NUMBER = {2},
     PAGES = {476--528},
}

\bib{mnn}{article}{
  AUTHOR = {Mathew, Akhil},
  author={Naumann, Niko},
  author={Noel, Justin},
     TITLE = {On a nilpotence conjecture of {J}. {P}. {M}ay},
   JOURNAL = {J. Topol.},
  FJOURNAL = {Journal of Topology},
    VOLUME = {8},
      YEAR = {2015},
    NUMBER = {4},
     PAGES = {917--932},
}

\bib{mcc}{article}{
AUTHOR = {McClure, James E.},
     TITLE = {{$E_\infty$}-ring structures for {T}ate spectra},
   JOURNAL = {Proc. Amer. Math. Soc.},
  FJOURNAL = {Proceedings of the American Mathematical Society},
    VOLUME = {124},
      YEAR = {1996},
    NUMBER = {6},
     PAGES = {1917--1922},

}

\bib{meier}{misc}{
    author={Meier, Lennart},
    title={Connective models for topological modular forms},
    note={in preparation},
  }

\bib{noe}{article}{
    AUTHOR = {Noether, Emmy},
     TITLE = {Normalbasis bei {K}\"{o}rpern ohne h\"{o}here {V}erzweigung},
   JOURNAL = {J. Reine Angew. Math.},
    VOLUME = {167},
      YEAR = {1932},
     PAGES = {147--152},
}

\bib{rognes}{article}{
   author={Rognes, John},
   title={Galois extensions of structured ring spectra.},
   journal={Mem. Amer. Math. Soc.},
   volume={192},
   date={2008},
   number={898},
   pages={viii--98},
}

\bib{rognes-s}{article}{
   author={Rognes, John},
   title={Stably dualizable
   groups},
   journal={Mem. Amer. Math. Soc.},
   volume={192},
   date={2008},
   number={898},
   pages={99--137},
}

\bib{ro-log}{inproceedings}{
    AUTHOR = {Rognes, John},
     TITLE = {Topological logarithmic structures},
 BOOKTITLE = {New topological contexts for {G}alois theory and algebraic
              geometry ({BIRS} 2008)},
    SERIES = {Geom. Topol. Monogr.},
    VOLUME = {16},
     PAGES = {401--544},
 PUBLISHER = {Geom. Topol. Publ., Coventry},
      YEAR = {2009},
}

\bib{r14}{inproceedings}{
    AUTHOR = {Rognes, John},
     TITLE = {Algebraic {$K$}-theory of strict ring spectra},
 BOOKTITLE = {Proceedings of the {I}nternational {C}ongress of
              {M}athematicians---{S}eoul 2014. {V}ol. {II}},
     PAGES = {1259--1283},
 PUBLISHER = {Kyung Moon Sa, Seoul},
      YEAR = {2014},
}

\bib{rss}{article}{
  author={Rognes, John},
  author={Sagave, Steffen},
  author={Schlichtkrull, Christian},
  title={Localization sequences for logarithmic topological Hochschild
    homology},
  journal={Math. Ann.},
  volume={363},
  date={2015},
  number={3-4},
  pages={1349--1398}, 
}

\bib{rss18}{article}{
  author={Rognes, John},
  author={Sagave, Steffen},
  author={Schlichtkrull, Christian},
  title={Logarithmic topological {H}ochschild homology of topological
              {$K$}-theory spectra},
  journal={J. Eur. Math. Soc. (JEMS)},
  volume={20},
  date={2018},
  number={2},
  pages={489--527}, 
}

\bib{sa}{article}{
  author={Sagave, Steffen},
  title={Logarithmic structures on topological K-theory spectra},
  journal={Geom. Topol.},
  volume={18},
  date={2014},
  number={1},
  pages={447--490}, 
}

\bib{st}{article}{
AUTHOR = {Stojanoska, Vesna},
     TITLE = {Duality for topological modular forms},
   JOURNAL = {Doc. Math.},
     VOLUME = {17},
      YEAR = {2012},
     PAGES = {271--311},
}

\end{biblist}
\end{bibdiv}

\end{document}